\theoremstyle{remark}
\def\R{\mathbb{R}}
\def\cN{\mathcal{N}}
\def\cA{\mathcal{A}}
\def\cK{\mathcal{K}}
\def\cT{\mathcal{T}}
\def\cE{\mathcal{E}}
\def\p{\partial}
\def\[{\partial}
\def\O{\Omega}
\def\ssT{{\scriptscriptstyle T}}
\def\HT{{H^2(\O,\cT_h)}}
\def\mean#1{\left\{\hskip -5pt\left\{#1\right\}\hskip -5pt\right\}}
\def\jump#1{\left[\hskip -3.5pt\left[#1\right]\hskip -3.5pt\right]}
\def\smean#1{\{\hskip -3pt\{#1\}\hskip -3pt\}}
\def\sjump#1{[\hskip -1.5pt[#1]\hskip -1.5pt]}
\def\jumptwo{\jump{\frac{\p^2 u_h}{\p n^2}}}
\def\b#1{\boldsymbol{#1}}
\def\norm #1{{\left\vert\kern-0.25ex\left\vert\kern-0.25ex\left\vert #1 
    \right\vert\kern-0.25ex\right\vert\kern-0.25ex\right\vert}}
\theoremstyle{plain}
\newtheorem{theorem}{Theorem}[section]
\newtheorem{lemma}[theorem]{Lemma}
\newtheorem{example}[theorem]{Example}
\theoremstyle{remark}
\newtheorem{remark}[theorem]{Remark}
\begin{document}
\allowdisplaybreaks[4]
\numberwithin{figure}{section}
\numberwithin{table}{section}
 \numberwithin{equation}{section}
%
\title[ Adaptive Quadratic Finite Element method for Unilateral Contact Problem]
 {Adaptive quadratic finite element method for the unilateral contact problem}
 \author{Rohit Khandelwal}\thanks{}
\address{Department of Mathematics, Indian Institute of Technology Delhi - 110016}
\email{rohitkhandelwal004@gmail.com }
\author{Kamana Porwal}\thanks{The second author's work is supported  by CSIR Extramural Research Grant}
\address{Department of Mathematics, Indian Institute of Technology Delhi - 110016}
\email{kamana@maths.iitd.ac.in}
\author{Tanvi Wadhawan}\thanks{}
\address{Department of Mathematics, Indian Institute of Technology Delhi - 110016}
\email{tanviwadhawan1234@gmail.com}
\date{}

\begin{abstract}
In this paper, we present and analyze a posteriori error estimates in the energy norm of a quadratic finite element method for the frictionless unilateral
contact problem.  The reliability and the efficiency of a posteriori error estimator is discussed. The suitable decomposition of the discrete space $\b{V^h}$ and a discrete space $\b{Q^h}$, where the discrete counterpart of the contact force density is defined, play crucial role in  deriving a posteriori error estimates. Numerical results are presented exhibiting the reliability and the efficiency of the proposed error estimator.
\end{abstract}

\keywords{ Signorini problem;  Quadratic finite elements; A posteriori error analysis; Variational inequalities}
%
%
\maketitle
\allowdisplaybreaks
\def\R{\mathbb{R}}
\def\cA{\mathcal{A}}
\def\cK{\mathcal{K}}
\def\cN{\mathcal{N}}
\def\p{\partial}
\def\O{\Omega}
\def\bbP{\mathbb{P}}
\def\cV{\mathcal{V}}
\def\cM{\mathcal{M}}
\def\cT{\mathcal{T}}
\def\cE{\mathcal{E}}
\def\bF{\mathbb{F}}
\def\bC{\mathbb{C}}
\def\bN{\mathbb{N}}
\def\ssT{{\scriptscriptstyle T}}
\def\HT{{H^2(\O,\cT_h)}}
\def\mean#1{\left\{\hskip -5pt\left\{#1\right\}\hskip -5pt\right\}}
\def\jump#1{\left[\hskip -3.5pt\left[#1\right]\hskip -3.5pt\right]}
\def\smean#1{\{\hskip -3pt\{#1\}\hskip -3pt\}}
\def\sjump#1{[\hskip -1.5pt[#1]\hskip -1.5pt]}
\def\jumptwo{\jump{\frac{\p^2 u_h}{\p n^2}}}

\section{Introduction}\label{Intro}
Numerical analysis of the non-linear problems arising from unilateral contact problems using finite element methods exhibits technical adversity both in approximating the continuous problem and numerical modeling of contact conditions on a part of the boundary. The Signorini contact model typically is a prototype model for the class of unilateral contact problems \cite{KO:1988:CPBook}. The Signorini contact problem can be recasted as an elliptic variational inequality of the first kind \cite{Glowinski:2008:VI} where the inequality constraint arises due to non linearity condition on the contact boundary. Later, the location of the free boundary (the part of the boundary where it touches the given obstacle) is not a priori known, and therefore, it forms a part of the numerical approximation. Hence, it is quite challenging both in the theory and computation to analyze finite element approximation of the Signorini problem using quadratic elements.

Adaptive finite element methods (AFEM) \cite{AO:2000:Book,Verfurth:1995:AdaptiveBook} are considered as an essential tool in boosting the precision of the numerical approximation of the non-linear problems. AFEM is mainly based on the reliable and an (locally) efficient a posteriori error estimators which are known quantities that depends on the given data and discrete solution. Subsequently, 
there has been a tremendous work on the analysis and development of finite element methods for variational
inequalities. We refer to articles \cite{Glowinski:2008:VI,KO:1988:CPBook,Belgacem:2000:NSUCP,Frenco:1977:FEVI,Francesco:1977:FEUP} for convergence  analysis of the Signorini problem using linear finite element method, where as in \cite{Belhachmi:2003:QuadSig,Hild:2002:QuadraticFEM} a priori analysis of quadratic finite element method has been derived for the contact problem. An extensive study of convergence analysis of discontinuous Galerkin (DG) methods for simplified Signorini problem has been carried out in \cite{Wang:2010:DGEV}. {{In the monograph \cite{Han:DG:2011} several  DG methods have been discussed for the Signorini problem and therein a priori error analysis have been established.  Adaptive conforming finite element method for the Signorini problem has been discussed in \cite{Krause:2015:apost_Sig,Hild:2007:CPE,HN:2005,Weiss:2004:Cf}. The articles \cite{TG:2016:VIDG1, Walloth:2019:Dg} analyze a posteriori error analysis of discontinuous Galerkin finite element methods for the Signorini problem. Note that, the solution of the Signorini problem may not be of class $H^{3}$ because of the presence of the free boundary thereby the uniform refinement does not yield the optimal convergence using quadratic finite element approximation (see \cite{Hild:2002:QuadraticFEM}) but adaptive refinement gives the optimal convergence.
In this paper, we derive the residual based a posteriori error estimates for the quadratic conforming finite element method for the unilateral contact problem}}. 
To the best of the knowledge of the authors, quadratic AFEM for the Signorini problem has not been discussed so far. 
One of the key ingredient of our analysis is the appropriate construction of the discrete counterpart of the continuous contact force density which helps in proving the main results of this article. 

\par

The outline of this article is as follows. In Section \ref{sec:Prelim}, we introduce continuous contact force density and some notations which are used in later analysis. Therein, we also present the continuous (strong and weak) formulation of the Signorini problem and discuss some preliminary results. Section \ref{sec:Discrete} is devoted to the introduction of discrete spaces  on which discrete problem and discrete contact force density are defined followed by introducing the discrete Lagrange multiplier and deriving its basic properties. Further, in Section \ref{sec4}, we introduce quasi discrete contact force which imitates the property of continuous Lagrange multiplier but computed using discrete contact force density. In Section \ref{sec:Apoestriori}, we propose and analyze a posteriori error estimator, therein the reliability and efficiency of the error estimator is discussed. Finally, numerical experiments illustrating the convergence behavior of proposed a posteriori error estimator using quadratic finite elements are depicted in Section \ref{sec:NumResults}.
\par
Let $\O\subset \mathbb R^2$ represents a bounded, polygonal elastic body with Lipschitz boundary $\partial \Omega=\Gamma$ which is partitioned into three non overlapping, relatively open parts $\Gamma_D$, $\Gamma_N$ and $\Gamma_C$ with $meas(\Gamma_D) > 0$ and $\overline \Gamma_C \subset \Gamma~ \backslash~\overline \Gamma_D$ where $meas(A)$ denotes the measure of any set $A \subset \bar{\Omega}$. Let $\b{e_1}$ and $~\b{e_2}$ denotes the standard ordered basis functions of $\mathbb{R}^2$. 

\par
\section{Basic Preliminaries and Definitions} \label{sec:Prelim}
We recall here some basic notations associated with the finite element setting which are required in the subsequent sections:
\begin{itemize}[noitemsep]
    \item  $\mathcal{T}_h$ is a family of regular triangulation of $\O$,
   
    \item $\mathcal{E}_h$ denotes set of all edges of $\mathcal{T}_h$ ,
    \item $\mathcal{E}_h^i$ denotes set of all interior edges of $\mathcal{T}_h$,
    \item  $\mathcal{E}_h^b$ denotes set of all boundary edges of $\mathcal{T}_h$,
  
    \item $\mathcal{E}_h^N$ denotes set of all boundary edges lying on  $\Gamma_N$, 
    \item $\mathcal{E}_h^C$ denotes set of all boundary edges lying on  $\Gamma_C$,
   
     \item $\mathcal{V}_h$ denotes set of all the vertices of $\mathcal{T}_h$,
     \item $\mathcal{M}_h$ denotes set of all the midpoints of edges of $\mathcal{T}_h$,
    \item $\mathcal{V}_{e}$ denotes set of vertices lying on edge $e$,
    \item $\mathcal{M}_{e}$ refers to the midpoint of the edge $e$,
    \item $\mathcal{V}_h ^C$ denotes the set of vertices of $\mathcal{T}_h$ lying on $\overline{\Gamma_C}$,
    \item $\mathcal{V}_h^D$ denotes the set of vertices of $\mathcal{T}_h$ lying on $\overline{ \Gamma_D}$,
     \item $\mathcal{M}_h^C$ denotes the set of midpoint of the edges lying on~ $\Gamma_C$,
     \item $\mathcal{M}_h^D$ denotes the set of midpoint of the edges lying on~ $\Gamma_D$,
     \item $\mathcal{V}_h^o$ refers to $\mathcal{V}_h$ $\backslash$ $\mathcal{V}_h^D$, 
    \item $\mathcal{M}_h^o$ refers to $\mathcal{M}_h$ $\backslash$ $\mathcal{M}_h^D$, 
     \item $T$ is an element of $\mathcal{T}_h$,
   \item  $h_T$ is the diameter of $T$ where $T\in \mathcal{T}_h$,
    \item $h$  refers to maximum of the set $\{h_T : T \in \mathcal{T}_h\}$,
   \item  $h_e$ is the length of an edge $e$,
   \item $\omega_p$ refers to the set of all elements sharing the node $p$,
   \item $h_p$ refers to maximum of the set $\{h_T : T \in \omega_p$\},
    \item $\gamma_{p,N} := \partial \omega_p \cap \Gamma_N$,
     \item $\gamma_{p,C} := \partial \omega_p \cap \Gamma_C$,
      \item $\gamma_{p,I}$ refers to all interior edges in $\omega_p$,
      \item $h_s$ refers to maximum of the set $\{h_e : e \in \gamma_p$ where $\gamma_p=\gamma_{p,I},\gamma_{p,N}$ or $\gamma_{p,C}$\},
     \item $P_{k}(T)$ denotes the space of polynomials of degree $\leq k$ defined on $T$ where $0 \leq k \in \mathbb{Z}$,
\item $|S|$ denotes the cardinality of the set $S$.
   \end{itemize}
 \par
Next, we define the following differential operators and preliminary definitions for the further use:
  \begin{itemize}[noitemsep]
 \item For any Banach space $\b{X}$, let $\b{X^*}$ denotes the dual space of $(\b{X},\|\cdot\|_{\b{X}})$ with the dual norm $\|\cdot\|_*$ defined by 
\begin{align*}
    \|\b{L}\|_{\b{*}}~:= \underset{\b{v}\in \b{X},~ \b{v} \neq \b{0}}{sup}~\dfrac{~L(\b{v})~}{\|{\b{v}}\|_{\b{X}}} ~~~\forall~ \b{L} \in \b{X^*},
\end{align*}
 \item $\nabla\b{v}$ is a $2\times 2$ gradient matrix of a vector $\b{v} \in \mathbb{R}^2$,
 \item For any matrix $\b{M}= (m_{ij}) \in \mathbb{R}^{2\times 2}$, the divergence of $\b{M}$ is defined as
 \begin{align*}
  div (\b{M}) := \sum_{j=1}^{2} \frac{\partial}{\partial x_j}(m_{ij}), ~i=1,2.   
 \end{align*}
 \item $\b{\epsilon}(\b{v})$ is the linearized strain tensor defined by $\frac{1}{2} (\nabla\b{v}+\nabla\b{v}^T)$,
 {
 \item $A$ is the fourth-order elasticity tensor of the material,
 \item $\b{\sigma}(\b{v})$ is the linearized stress tensor defined by $A\b{\epsilon}(\b{v})$},
 \item $H^{m}(\O)$ denotes the usual Sobolev space \cite{BScott:2008:FEM} of square integrable functions whose weak derivative upto order $m$ is also square integrable  with the corresponding norm $\|\cdot\|_{H^m(\Omega)}$  and seminorm $|\cdot|_{H^m(\Omega)}$,
 \item  For a non integer positive number $s = m +k$, where $m$ is an integer and $0<k<1$, the fractional ordered subspace $H^{s}(\O)$ is defined as 
 \begin{align*}
  H^{s}(\O) = \{\b{v}\in H^{m}(\O) : \dfrac{{\lvert{v}(x)−{v}(y)\rvert}}{{\lvert x−y \rvert}^{k+1}} \in L^p(\Omega \times \Omega) \},
 \end{align*}
 \item For any vector $\b{v}=(v_1,v_2) \in \b{H^{m}(\O)}$=$[H^m(\O)]^2$, we define the  product norm on the domain as $\|\b{v}\|_{\b{H^m(\O)}}=\bigg(\sum_{i=1}^2\|{v_i}\|^2_{{H^m(\O)}}\bigg)^{1/2} $ and seminorm  $|\b{v}|_{\b{H^m(\O)}}=\bigg(\sum_{i=1}^2|{v_i}|^2_{{H^m(\O)}}\bigg)^{1/2}, $
 \item $\b{\langle \cdot, \cdot \rangle_{-1,1}}$ denotes the duality pairing between $\b{H^1}(\O)$ and $\b{H^{-1}(\O)},$
 \item For any $\Omega^{'} \subset \Omega$, $\langle \cdot, \cdot \rangle_{\b{-1,1},\Omega^{'}}$ denotes the duality pairing between $\b{H^1}(\O^{'})$ and $\b{H^{-1}(\O^{'})},$
 \item $\langle \cdot, \cdot \rangle_{-\frac{1}{2},\frac{1}{2}, \Gamma_C}$ denotes the duality pairing between $H^{\frac{1}{2}}(\Gamma_C)$ and $H^{-\frac{1}{2}}(\Gamma_C)$.
\item For any $v \in H^1(\O)$, we denote $v^{+}=max\{v, 0\}$ to be the positive part of the function.
  \end{itemize}

\par 
{
Throughout this article, we assume that $C$ is a positive generic constant independent of mesh parameter $h$. Further, the notation $x\lesssim y$ denotes that there is a generic constant $C$ such that $x\leq Cy$.
\par
Next, we define the broken Sobolev space $[H^1(\Omega,\mathcal{T}_h)]^2$, with the aim of defining the jump and averages of discontinuous functions efficiently as
\begin{align*}
    [H^1(\Omega,\mathcal{T}_h)]^2 := \{ \b{v}\in [L^2(\Omega)]^2 : \b{v}|_T\in [H^1(T)]^2~ \forall~ T\in \cT_h\}.
\end{align*}
\par
Let $e \in \cE_h^i$ be an interior edge and let $T^{+}$ and $T^{−}$ be the neighbouring elements s.t. $e \in \partial{T}^{+} \cup \partial T^{−}$ and let $\b{n}^{\pm}$ is the unit outward normal vector on $e$ pointing from $T^{+}$ to $T^{-}$ s.t. $\b{n^{-}}= - \b{n^{+}}.$ For a vector valued function $\b{v}\in [H^1(\Omega,\mathcal{T}_h)]^2$ and a matrix valued function $\b{\Phi}\in [H^1(\Omega,\mathcal{T}_h)]^{2\times 2}$, averages $\smean{\cdot}$ and jumps $\sjump{\cdot}$ across the edge $e$ are defined as follows:     
\begin{align*}
&\smean{\b{v}}= \frac{1}{2} (\b{v^+}+\b{v^-})~~\text{and}~~\sjump{\b{v}}= \b{v^+}\otimes \b{n^+}+\b{v^-}\otimes\b{n^-},\\
&\smean{\b{\Phi}}= \frac{1}{2} (\b{\Phi^+}+\b{\Phi^-})~ \text{and}~~\sjump{\b{\Phi}}=\b{\Phi^+}\b{n^+}+ \b{\Phi^-}\b{n^-},
\end{align*}
where
    $\b{v^{\pm}}=\b{v}|_{{T}^\pm},~\b{\Phi^{\pm}}=\b{\Phi}|_{{T}^\pm}.$
\par
For any $e \in \cE_h^b$, it is clear that there is a triangle $T \in \mathcal{T}_h$ such that $e\in \partial T \cap \partial\O$. Let $\b{n_e}$ be the unit normal of $e$ that points outside $T$. Then, the averages $\smean{\cdot}$ and jumps $\sjump{\cdot}$ of vector valued function $\b{v}\in [H^1(\Omega,\mathcal{T}_h)]^2$ and a matrix valued function $\b{\Phi}\in [H^1(\Omega,\mathcal{T}_h)]^{2\times 2}$ are defined as follows:
\begin{align*}
&\smean{\b{v}}=\b{v},~~\text{and}~~\sjump{\b{v}}=\b{v}\otimes \b{n_e},
\\
&\smean{\b{\Phi}}= \b{\Phi},~~\text{and}~~\sjump{\b{\Phi}}=\b{\Phi}\b{n_e}.
\end{align*}
In the above definitions $\b{v}\otimes \b{n}$ is a $2\times 2$ matrix with $v_in_j$ as its $(i,j)^{th}$ entry. }
\par
For any displacement field $\b{v}$, we adopt the notation $v_m=\b{v}\cdot \b{m}$ and $\b{v}_\tau=\b{v}-v_n\b{m}$, respectively, as its normal and tangential component on the boundary where $\b{m}$ is the outward unit normal vector to $\Gamma$. Similarly, for a tensor-valued function $\b{\Phi}$, the normal and tangential components are defined as ${\Phi}_m=\b{\b{\Phi} m}\cdot \b{m}$ and $\b{\b{\Phi}}_\tau=\b{\b{\Phi}}\b{m}-\b{\Phi}_m \b{m}$, respectively. Further, we have the following decomposition formula
\begin{align*}
  (\b{\Phi}\b{m})\cdot\b{v}= \Phi_mv_m+ \b{\Phi}_\tau\cdot\b{v_\tau}.
\end{align*}
In the further analysis, for any tensor-valued function $\b{\Phi}$ the term $\hat{\b{\Phi}}(\b{v})$ denotes the boundary contact stresses in the direction of the normal at the potential contact boundary and is equal to $\b{\Phi(\b{v})n}$ where $\b{n}$ is the outward unit normal on $\Gamma_C$. In this article, we assume that the outward unit normal vector $\b{n}$ to $\Gamma_C$ is constant and for the sake of simplicity, we define $\b{n}=\b{e_1}$.  
\par
In this article, we assume that our elastic body is homogeneous and  isotropic, as a result
\begin{align}\label{1.1}
 \sigma(\b{v})=A\b{\epsilon}(\b{v}):=\chi tr(\b{\epsilon}(\b{v}))I+2\mu\b{\epsilon}(\b{v}).   
\end{align}
where, $\chi>0$ and $\mu>0$ denote the Lam$\acute{e}$'s coefficients.
In order to define the continuous problem, we define the space $\b{V}$ of admissible displacements as
\begin{align*}
    \b{V}=\{ \b{v} \in [H^1(\Omega)]^2 : \b{v}=\b{0}~ \text{on}~ \Gamma_D\},
\end{align*}
and a non empty, closed and convex subset of $\b{V}$ is defined as 
\begin{align*}
    \b{\cK}=\{ \b{v} \in \b{V} : v_n=v_1\leq 0~ \text{a.e on}~ \Gamma_C\}.
\end{align*}
\par
\noindent
Given $\b{f} \in [L^2(\Omega)]^2$, $\b{g} \in [L^2(\Gamma_N)]^2$, the weak formulation of unilateral contact problem is to find $\b{u} \in \b{\cK}$ such that 
\begin{align}\label{1.2}
    a(\b{u, v - u}) \geq  L (\b{v - u})~~\forall ~~\b{v} \in \b{\cK},
    \end{align}
where, the bilinear form $a(\cdot, \cdot)$ and the linear functional $ L(\cdot)$ are defined by
    \begin{align*}
    a(\b{w, v})&=\int_\Omega \b{\sigma} (\b{w})\colon \b{\epsilon}(\b{v})~dx,\\
    L(\b{v})&= \int_\Omega \b{f} \cdot \b{v}~dx + \int_ {\Gamma_N} \b{g} \cdot \b{v}~ds~~\forall~ \b{w},\b{v}~\in~\b{V}.
    \end{align*}
The strong form associated to the variational inequality of the first kind \eqref{1.2} is to find the displacement vector $\b{u}: \Omega \rightarrow \mathbb{R}^2$ such that the following holds:
\begin{align*}
    \b{-div} ~~\b{\sigma}(\b{u}) &= \b{f} ~~~~\textit{in}~\Omega,\\
    \b{u} &= \b{0} ~~~~\textit{on}~\Gamma_D,\\
    \b{\sigma}(\b{u})\b{n} &= \b{g} ~~~~\textit{on}~\Gamma_N,\\
    u_n&\leq 0,~~\sigma_n(\b{u})\leq 0,~~{u_n\sigma_n(\b{u})=0}~~\text{and}~~\b{\sigma_\tau(\b{u})}=0~ \text{on}~\Gamma_C.
    \end{align*}
The existence and uniqueness of the solution of problem $\eqref{1.2}$ is well known from the theory of variational inequalities \cite{Glowinski:2008:VI}.

\par
Next, we define the continuous contact force density $\b{\lambda} \in \b{V^*}$ as
\begin{align} \label{CFD}
\b{\langle \b{\lambda}, \b{v} \rangle_{-1,1}} = L(\b{v}) − a(\b{u},\b{v})~\forall~\b{v}\in \b{V},
\end{align}
In the next lemma, we collect some important properties corresponding to continuous contact force density $\b{\lambda}$.
\begin{lemma} \label{CFD1}
The following holds 
\begin{align}
\b{\langle \b{\lambda}, \b{v}-\b{u}\rangle_{-1,1}} &\leq 0\quad \forall~ \b{v} \in \b{\cK}. \label{eq:SCT} \\ 
\b{\langle \b{\lambda} , \b{\phi} \rangle_{-1,1}} &\geq 0 \quad \forall~ \b{0} \leq \b{\phi} \in \b{V} . \label{eq:POOI}
\end{align}
\end{lemma}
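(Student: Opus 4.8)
The plan is to obtain both inequalities directly from the definition \eqref{CFD} of $\b{\lambda}$ together with the variational inequality \eqref{1.2}; nothing beyond these two facts is required.

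For \eqref{eq:SCT}, I would fix $\b{v}\in\b{\cK}$. Since $\b{v},\b{u}\in\b{\cK}\subset\b{V}$, the difference $\b{v}-\b{u}$ lies in $\b{V}$, so \eqref{CFD} may be evaluated at it, giving $\langle \b{\lambda}, \b{v}-\b{u}\rangle_{-1,1} = L(\b{v}-\b{u}) - a(\b{u},\b{v}-\b{u})$. The variational inequality \eqref{1.2} states exactly that the right-hand side is non-positive, which is \eqref{eq:SCT}.

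For \eqref{eq:POOI}, I would fix $\b{0}\leq\b{\phi}\in\b{V}$ and test with $\b{v}:=\b{u}-\b{\phi}$. Then $\b{v}\in\b{V}$ because $\b{u},\b{\phi}\in\b{V}$, and on $\Gamma_C$ its normal component satisfies $v_n=v_1=u_1-\phi_1\leq u_1=u_n\leq 0$ a.e., using the convention $\b{n}=\b{e_1}$, the nonnegativity of $\b{\phi}$, and $\b{u}\in\b{\cK}$; hence $\b{v}\in\b{\cK}$. Applying the already-established \eqref{eq:SCT} to this $\b{v}$ gives $\langle \b{\lambda},-\b{\phi}\rangle_{-1,1}=\langle \b{\lambda},\b{v}-\b{u}\rangle_{-1,1}\leq 0$, and linearity of the duality pairing yields \eqref{eq:POOI}.

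There is no genuine obstacle here; the only step meriting a moment's care is checking that the test function in the second part is admissible, i.e. that subtracting a nonnegative $\b{\phi}$ from $\b{u}$ preserves the unilateral constraint $v_n\leq 0$ on $\Gamma_C$ — this is precisely where the normalization $\b{n}=\b{e_1}$ and the sign condition defining $\b{\cK}$ are used. Everything else is substitution into \eqref{CFD} and an appeal to \eqref{1.2}.
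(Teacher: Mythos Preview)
Your proposal is correct and follows essentially the same route as the paper's own proof: the paper derives \eqref{eq:SCT} directly from \eqref{1.2} and \eqref{CFD}, and obtains \eqref{eq:POOI} by substituting $\b{v}=\b{u}-\b{\phi}\in\b{\cK}$ into \eqref{1.2}. Your added justification that $\b{v}=\b{u}-\b{\phi}$ indeed lies in $\b{\cK}$ is exactly the point the paper leaves implicit.
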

\begin{proof}
The relation $(\ref{eq:SCT})$ can be realized directly from \eqref{1.2} and \eqref{CFD}.

To prove (\ref{eq:POOI}), let $\b{0} \leq \b{\phi} \in \b{V}$ and substitute $\b{v}= \b{u - \phi} \in \b{\cK}$ in the variational inequality \eqref{1.2} to get $\b{\langle \b{\lambda} , \b{\phi} \rangle_{-1,1}} \geq  0$.
\end{proof}
\par
{In order to realize another representation to continuous contact force density $\b{\lambda}$, we further define an intermediate space $\b{V_0}$ as
	\begin{align*}
	\b{V_0}:= \{ \b{v}=(v_1,v_2) \in \b{V},~v_1 = 0~\text{on}~ \Gamma_C\}.
	\end{align*}
	Since $\b{v}= \b{u} \pm \tilde{\b{v}} \in \b{\cK}~\forall~\tilde{\b{v}} ~\in~\b{V_0}$, therefore inequality \eqref{1.2} reduces to
	\begin{align}\label{2.6}
	a(\b{u},\b{\tilde v})~=~L(\tilde{\b{v}})~~~\forall~\tilde{\b{v}}\in \b{V_0}.
	\end{align}
	\begin{remark}
	For each $\b{v} \in \b{V}$, we have $\b{v}= (v_1,v_2):= \b{w_1}+\b{w_2}$ where $\b{w_1}=(v_1,0)$ and $\b{w_2}=(0,v_2)$. In view of equation \eqref{CFD}, we can rewrite $\langle \b{\lambda,\b{v} \rangle_{-1,1}} = \langle {\lambda}_1, {v_1} \rangle_{-1,1} +\langle {\lambda_2},{v_2} \rangle_{-1,1} $ where
	\begin{align*}
	\langle {\lambda}_1,{v_1} \rangle_{-1,1} &= L(\b{w_1}) - a (\b{u}, \b{w_1}), \\
	\langle {\lambda}_2,{v_2} \rangle_{-1,1} &= L(\b{w_2}) - a (\b{u}, \b{w_2}). 
	\end{align*}
	As $\b{w_2} \in \b{V_0}$,  using equation (\ref{2.6}), we obtain
	\begin{align*}
	\langle {\lambda}_2,{v_2} \rangle_{-1,1} &= 0 \quad \forall~ \b{v}=(v_1,v_2) \in \b{V}.
	\end{align*}
	\end{remark}
	\noindent
	Thus, we have the representation
	\begin{align} \label{3.7}
	\b{\langle \b{\lambda}, \b{v} \rangle_{-1,1}} =  \langle {\lambda}_1, {v_1} \rangle_{-1,1} \quad\forall~ \b{v}=(v_1,v_2) \in \b{V}.
	\end{align} }
An application of Green's theorem \cite{Han:DG:2011} yields 
\begin{align*}
    \langle \lambda_1, v_1 \rangle_{-1,1} &= -  \langle \hat{\b{\sigma}}_1(\b{u}), v_1 \rangle_{-\frac{1}{2},\frac{1}{2},\Gamma_C}, \\
    \langle \lambda_2, v_2 \rangle_{-1,1} &= -  \langle \hat{\b{\sigma}}_2(\b{u}), v_2 \rangle_{-\frac{1}{2},\frac{1}{2},\Gamma_C} =0.
\end{align*}

\par
In this article, we consider the approximation of the problem $\eqref{1.2}$ by quadratic finite element method. To this end, we define the finite element space $\b{V^h} \subset \b{V}$ as the space of continuous piece wise quadratic finite element functions over $\mathcal{T}_h$ i.e.
\begin{align*}
     \b{V^h}&=\{ \b{v} \in [C(\overline \Omega)]^2: \b{v}|_T \in [P_2(T)]^2~~ \forall~ T \in \mathcal{T}_h, \b{v} = \b{0} ~\text{on}~ \Gamma_D \}.
\end{align*}

For concreteness, we state the following discrete trace inequality and inverse inequality which will be used in the subsequent analysis \cite{BScott:2008:FEM}.
\begin{lemma}\label{lem:Discrete}
Let $\b{v}\in [H^1(T)]^2$. Then, 
\begin{align*}
\|\b{v}\|^2_{\b{L^2}(e)} \lesssim h_T^{-1}\|\b{v}\|^2_{\b{L^2}(T)} + h_T \lvert\b{v}\rvert^2_{\b{H^1}(T)},
\end{align*}
where $T \in \mathcal{T}_h$ and $e$ is an edge of $T$.
\end{lemma}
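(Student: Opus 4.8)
The plan is to reduce the estimate on a general element $T$ to the corresponding estimate on a fixed reference triangle $\hat T$ by an affine change of variables, and then handle the reference configuration by a soft compactness/trace-theorem argument. First I would fix the reference triangle $\hat T$ with a representative edge $\hat e$, and invoke the continuity of the trace operator $H^1(\hat T)\to L^2(\hat e)$ together with the (finite-dimensional or standard Sobolev) bound to obtain a constant $\hat C$ with
\begin{align*}
\|\hat{\b v}\|^2_{\b{L^2}(\hat e)} \le \hat C\left(\|\hat{\b v}\|^2_{\b{L^2}(\hat T)} + \lvert\hat{\b v}\rvert^2_{\b{H^1}(\hat T)}\right)\qquad\forall~\hat{\b v}\in[H^1(\hat T)]^2.
\end{align*}
Since the inequality is applied component-wise and the product norm is the $\ell^2$-sum of the scalar norms, it suffices to prove this for scalar $\hat v\in H^1(\hat T)$; the vector case then follows by squaring and summing over the two components.

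Next I would introduce the affine map $F_T:\hat T\to T$, $F_T(\hat x)=B_T\hat x + b_T$, pull back $\b v$ to $\hat{\b v}=\b v\circ F_T$, and track how the three quantities transform. Standard scaling estimates for a shape-regular family give $\|B_T\|\lesssim h_T$, $\|B_T^{-1}\|\lesssim h_T^{-1}$, $\lvert\det B_T\rvert\simeq h_T^2$, and $\lvert\hat e\rvert \simeq h_T^{-1}\lvert e\rvert$ relative to the Jacobian of the restricted map on the edge. Substituting these into the change-of-variables formulas for the $L^2(e)$, $L^2(T)$ and $H^1(T)$ (semi)norms yields, schematically,
\begin{align*}
\|\b v\|^2_{\b{L^2}(e)} \simeq h_T\,\|\hat{\b v}\|^2_{\b{L^2}(\hat e)},\qquad
\|\b v\|^2_{\b{L^2}(T)} \simeq h_T^2\,\|\hat{\b v}\|^2_{\b{L^2}(\hat T)},\qquad
\lvert\b v\rvert^2_{\b{H^1}(T)} \simeq \|\nabla\hat{\b v}\,B_T^{-1}\|^2 \simeq \|\hat{\b v}\|^2_{\b{H^1}(\hat T)}\ \text{up to}\ h_T^2\ \text{weights};
\end{align*}
more precisely $\lvert\hat{\b v}\rvert^2_{\b{H^1}(\hat T)}\lesssim h_T^{-2}\cdot h_T^2\,\lvert\b v\rvert^2_{\b{H^1}(T)}$ and $\|\hat{\b v}\|^2_{\b{L^2}(\hat T)}\lesssim h_T^{-2}\|\b v\|^2_{\b{L^2}(T)}$. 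Feeding these into the reference inequality and multiplying through by $h_T$ produces exactly $\|\b v\|^2_{\b{L^2}(e)}\lesssim h_T^{-1}\|\b v\|^2_{\b{L^2}(T)}+h_T\lvert\b v\rvert^2_{\b{H^1}(T)}$, with the hidden constant depending only on $\hat C$ and the shape-regularity constant of the family $\mathcal T_h$, hence independent of $h$.

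The main obstacle, such as it is, is purely bookkeeping: keeping the powers of $h_T$ straight through the three different scaling laws (the edge measure scales like $h_T$, the area like $h_T^2$, and the gradient picks up a factor $\|B_T^{-1}\|\simeq h_T^{-1}$), and making sure the asymmetric weights combine to give the stated $h_T^{-1}$ and $h_T^{+1}$ on the right-hand side rather than a uniform power. There is no analytic difficulty beyond the classical trace theorem on the reference element; shape regularity is what guarantees a single constant works uniformly over the triangulation. One can alternatively give a direct proof on $T$ via an explicit extension/divergence-theorem identity (writing $\int_e v^2 = \int_T \operatorname{div}(v^2\,\b{w})\,dx$ for a suitable fixed vector field $\b w$ with $\b w\cdot\b n_e\ge 1$ on $e$), which avoids reference-element machinery and makes the $h_T$ powers transparent, but the scaling argument above is the most economical route given the tools already in the paper.
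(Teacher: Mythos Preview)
Your proof is correct and follows the classical route: trace theorem on a fixed reference element plus affine scaling under shape regularity. Note, however, that the paper does not actually prove this lemma; it merely states it as a known discrete trace inequality with a citation to Brenner--Scott~\cite{BScott:2008:FEM}, so there is no proof in the paper to compare against. Your argument is precisely the standard one found in that reference, and the bookkeeping on the $h_T$ powers is right (edge measure $\simeq h_T$, area $\simeq h_T^2$, gradient factor $\simeq h_T^{-1}$ from $\|B_T^{-1}\|$), yielding the stated $h_T^{-1}$ and $h_T$ weights.
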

\begin{lemma} 
Let $T~\in~\cT_h$ and $e$ be an edge of $T$. For $\b{v} \in \b{V^h}$, the following estimates hold
\begin{align*}
\begin{aligned}
\|\b{v}\|_{\b{L^2}(e)} &\lesssim h_e^{-\frac{1}{2}}\|\b{v}\|_{\b{L^2}(T)}, \\
\lvert \b{v} \rvert_{\b{H^1}(T)} &\lesssim h_T^{-1} \|\b{v}\|_{\b{L^2}(T)}.
\end{aligned}
\end{align*}

\end{lemma}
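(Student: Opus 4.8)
The plan is to prove both estimates by the classical scaling (affine–equivalence) argument, reducing everything to a fixed reference triangle on which all norms of the finite–dimensional space $[P_2(\hat T)]^2$ are mutually comparable.

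First I would fix a reference triangle $\hat T$ with a reference edge $\hat e\subset\partial\hat T$, and for the given $T\in\cT_h$ write the affine homeomorphism $F_T(\hat x)=B_T\hat x+b_T$ with $F_T(\hat T)=T$. For a regular family of triangulations one has the standard bounds $\|B_T\|\lesssim h_T$, $\|B_T^{-1}\|\lesssim h_T^{-1}$, $|\det B_T|\sim h_T^2$, and, by shape regularity, $h_e\sim h_T$ for every edge $e$ of $T$. Setting $\hat{\b v}=\b v\circ F_T\in[P_2(\hat T)]^2$ for $\b v\in\b{V^h}$ restricted to $T$, a change of variables (carried out componentwise and summed over the two components) gives the two–sided relations $\|\b v\|_{\b{L^2}(T)}^2\sim|\det B_T|\,\|\hat{\b v}\|_{\b{L^2}(\hat T)}^2$, $\|\b v\|_{\b{L^2}(e)}^2\sim h_e\,\|\hat{\b v}\|_{\b{L^2}(\hat e)}^2$, and $|\b v|_{\b{H^1}(T)}^2\lesssim\|B_T^{-1}\|^2\,|\det B_T|\,|\hat{\b v}|_{\b{H^1}(\hat T)}^2$.

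Next, since $[P_2(\hat T)]^2$ is finite–dimensional, the seminorm $|\cdot|_{\b{H^1}(\hat T)}$ and the trace functional $\hat{\b v}\mapsto\|\hat{\b v}\|_{\b{L^2}(\hat e)}$ are continuous with respect to $\|\cdot\|_{\b{L^2}(\hat T)}$; that is, there is $\hat C=\hat C(\hat T)$ with $|\hat{\b v}|_{\b{H^1}(\hat T)}\le\hat C\,\|\hat{\b v}\|_{\b{L^2}(\hat T)}$ and $\|\hat{\b v}\|_{\b{L^2}(\hat e)}\le\hat C\,\|\hat{\b v}\|_{\b{L^2}(\hat T)}$ for all $\hat{\b v}\in[P_2(\hat T)]^2$. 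Combining this with the scaling relations yields for the second estimate $|\b v|_{\b{H^1}(T)}^2\lesssim h_T^{-2}\,|\det B_T|\,\|\hat{\b v}\|_{\b{L^2}(\hat T)}^2\sim h_T^{-2}\,\|\b v\|_{\b{L^2}(T)}^2$, which is the inverse inequality after taking square roots. For the first estimate I would then feed the just–proved inverse inequality into the discrete trace inequality of Lemma \ref{lem:Discrete}: $\|\b v\|_{\b{L^2}(e)}^2\lesssim h_T^{-1}\|\b v\|_{\b{L^2}(T)}^2+h_T|\b v|_{\b{H^1}(T)}^2\lesssim h_T^{-1}\|\b v\|_{\b{L^2}(T)}^2\sim h_e^{-1}\|\b v\|_{\b{L^2}(T)}^2$ using $h_e\sim h_T$; alternatively it can be read directly off the chain $\|\b v\|_{\b{L^2}(e)}^2\sim h_e\|\hat{\b v}\|_{\b{L^2}(\hat e)}^2\lesssim h_e\|\hat{\b v}\|_{\b{L^2}(\hat T)}^2\sim h_e h_T^{-2}\|\b v\|_{\b{L^2}(T)}^2$.

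The only genuinely delicate point is the bookkeeping of the powers of $h_T$ coming from $\|B_T^{-1}\|$, from $|\det B_T|$, and from the edge length, together with the precise place where shape regularity is invoked to trade $h_e$ for $h_T$. The norm–comparison step itself is immediate once one notes that $|\cdot|_{\b{H^1}(\hat T)}$ and $\|\cdot\|_{\b{L^2}(\hat e)}$ are continuous seminorms on a finite–dimensional space; no injectivity is needed, since only domination by the $\b{L^2}(\hat T)$–norm is required, not equivalence.
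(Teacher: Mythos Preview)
Your scaling argument is correct and is exactly the standard textbook proof of these inverse and trace inequalities. Note, however, that the paper does not give its own proof of this lemma: it merely states the two estimates and cites Brenner--Scott \cite{BScott:2008:FEM} for them, so there is no in-paper argument to compare against. Your affine-equivalence reduction to a reference triangle together with finite-dimensional norm comparison is precisely the argument one finds in that reference, and your remark that only domination (not two-sided equivalence) is needed on $\hat T$ is well taken.
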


\section{Discrete Problem} \label{sec:Discrete}
In this section, we define the discrete formulation of the continuous problem $\eqref{1.2}$. { Further, we  construct an auxiliary discrete space $\b{Q^h}$, where the discrete counterpart of the contact force density is defined which will play a crucial role in forthcoming a posteriori error analysis.}
\par
Let $\{\psi_z\b{e_i}, ~z \in \mathcal{V}_h^o \cup \mathcal{M}_h^o,~i=1,2\}$ represents the canonical nodal Lagrange basis for the space $\b{V^h}$, i.e., for $z \in \mathcal{V}_h^o \cup \mathcal{M}_h^o$
\begin{align*}
    \begin{split}
    \begin{aligned}
    \psi_z(p) = \begin{cases} &1 ~~~\text{if}~ z~=~p \\
    &0~~~~\text{if}~z~\neq~p \end{cases}
    \end{aligned} ~~\forall~ p \in \mathcal{V}_h^o \cup \mathcal{M}_h^o.
    \end{split}
\end{align*}
Note that, for any $\b{v^h}=(v^h_1,v^{h}_2) \in \b{V^h}$ we have the following representation
\begin{align} \label{3.1}
\b{v^h}= \sum_{p \in \mathcal{V}_h^o \cup \mathcal{M}_h^o} \sum_{i=1}^{2}  v^{h}_i(p) \psi_p \b{e_i}.
\end{align}
We define the two discrete subspace $\b{W_1}$ and $\b{W_2}$ of $\b{V^h}$ as

\begin{align*}
    \b{W_1} &= \text{Span} \{\psi_z\b{e_i}~:~z \in (\mathcal{V}_h^o \cup \mathcal{M}_h^o) \setminus (\mathcal{V}_h^C \cup \mathcal{M}_h^C),~i=1,2\},\\
   \b{W_2}&=\text{Span} \{\psi_z\b{e_i}~:~z \in \mathcal{V}_h^C \cup \mathcal{M}_h^C,~i=1,2\}.
\end{align*}
Then, clearly $\b{V^h}= \b{W_1}\bigoplus \b{W_2}$. 
{ It can be observed that the subspace $\b{W_2}$ of $\b{V^h}$ is orthogonal to $\b{W_1}$ with respect to inner product:
\begin{align*}
  \langle {\b{v^h}, \b{w^h} \rangle_{\b{V^h}} := \sum_{T\in \mathcal{T}_h} \frac{|T|}{3}\bigg(\sum_{z\in \mathcal{V}_T} \b{v^h}(z)\b{w^h}(z) + \sum_{z\in \mathcal{M}_T} \b{v^h}(z)\b{w^h}(z)}\bigg)  
\end{align*}
where $\mathcal{V}_T$ and $\mathcal{M}_T$ refers to vertices and midpoints of the element $T$, respectively.} 
\par
\noindent
Further, we introduce the discrete set  $\b{\cK^h}$ of admissible displacements by 
\begin{align*}
    \b{\cK^h} =\{ \b{v^h}=(v^h_1,~v^h_2) \in \b{V^h} ~~\text{s.t}~~ v^h_1(z)\leq 0~~\forall ~z~ \in ~\mathcal{V}_h^C \cup \mathcal{M}_h^C\}.
\end{align*}
The quadratic finite element approximation of \eqref{1.2} is to find $\b{u^h} \in \b{\cK^h}$ such that 
\begin{align}\label{2.1}
    a(\b{u^h}, \b{v^h - u^h}) \geq L(\b{v^h - u^h}) \quad\forall~\b{v^h} \in \b{\cK^h}.
\end{align}
It can be observed that the non-empty, closed and convex set $\b{\cK^h} \not\subset \b{\cK}$ in general. For all $ z \in (\mathcal{V}_h^o \cup \mathcal{M}_h^o) \setminus (\mathcal{V}_h^C \cup \mathcal{M}_h^C)$, observe that $\b{v^h} = \b{u^h} \pm \psi_z\b{e_i} \in \b{\cK^h}$  since $$v^h_1(p) = u^h_1(p)
\leq 0~ \forall~p~\in \mathcal{V}_h^C \cup \mathcal{M}_h^C.$$
Therefore, we find
\begin{align}\label{2.2}
  a(\b{u^h}, {\psi_z\b{e_i}})~ = ~L({\psi_z\b{e_i}}) ~\forall~ z \in (\mathcal{V}_h^o \cup \mathcal{M}_h^o) \setminus (\mathcal{V}_h^C \cup \mathcal{M}_h^C). 
\end{align}
Henceforth,
\begin{align}\label{2.3}
  a(\b{u^h}, \b{v^h})~ = ~L({\b{v^h}}) \quad~\forall~ \b{v^h}~\in \b{W_1}.  
\end{align}
Further, for $z \in ~(\mathcal{V}_h^C \cup \mathcal{M}_h^C)$, we observe that $\b{v^h} = \b{u^h} - \psi_z\b{e_1} \in \b{\cK^h}$ as
$$v^h_1(p) = \begin{cases} u^h_1(p)~~~~~~~ &p\neq z \\
u^h_1(p)-1~~~~~ &p=z \end{cases}
~~~~\leq 0 \quad\forall~p~\in \mathcal{V}_h^C \cup \mathcal{M}_h^C. $$
On the similar lines, one can verify $\b{v^h} = \b{u^h} \pm \psi_z\b{e_2} \in \b{\cK^h}~ \forall~z\in(\mathcal{V}_h^C \cup \mathcal{M}_h^C)$. Thus,
\begin{align} \label{2.4}
    \begin{split}
    \begin{aligned}
     a(\b{u^h}, {\psi_z\b{e_1}})~ &\leq ~L({\psi_z\b{e_1}})\\
     a(\b{u^h}, {\psi_z\b{e_2}})~ &= ~L({\psi_z\b{e_2}})
    \end{aligned} ~~~~~ \forall~z~\in~(\mathcal{V}_h^C \cup \mathcal{M}_h^C).
    \end{split}
    \end{align}
\\
We now proceed to introduce a discrete space where we can define discrete counterpart of the contact force density $\b{\lambda}$. The
construction of the discrete space requires the introduction of some more notations related to the contact zone. Let $\mathcal{T}_h^C$ denotes the mesh formed by the edges of $\mathcal{T}_h$ on $\Gamma_C$ which is characterized by the subdivision of ($\b{x}_i^c)_{0\leq i\leq n}$ where $(\b{x}_i^c)_{0\leq i\leq n} \in \mathcal{V}_h^C$. Let $t_i = [\b{x}_i^c,~\b{x}_{i+1}^c]_{0 \leq i \leq n-1}$ denotes the element on $\Gamma_C$ with the midpoint $m_i^c$. Hence, we can write each element $t_i$ as union of two sub interval $q_i^1 \cup q_i^2$ where $q_i^1~=~[\b{x}_i^c,~\b{m}_{i}^c]$ and $q_i^2~=~[\b{m}_i^c,~\b{x}_{i+1}^c]$.  Thus, we can rewrite 
\begin{align*}
    \Gamma_C =\underset{0\leq i \leq n-1}{\bigcup}  q_i^1 \cup q_i^2.
\end{align*}
Now, with the following notations we define the discrete space $\b{Q^h}$ as
\begin{align} \label{qh}
\b{Q^h} =\{ \b{v^h}~\in~[C(\overline \Gamma_C)]^2: \b{v^h}|_{q_i^j} \in [P_1(q_i^j)]^2,~1\leq i \leq n-1,~j~=1,2\}.
\end{align}
We observe that the  dimension of the space $\b{Q^h}$ is $2\lvert {\cV}_h^C \cup \mathcal{M}_h^C \rvert$. Let $\{\phi_z\b{e_i}~: z\in\mathcal{V}_h^C \cup \mathcal{M}_h^C\}$ be the canonical nodal Lagrange basis for $\b{Q^h}$, i.e., for $z \in \mathcal{V}_h^C \cup \mathcal{M}_h^C$
\begin{align*}
    \begin{split}
    \begin{aligned}
    \phi_z(p) = \begin{cases} &1 ~~~\text{if}~ z~=~p \\
    &0~~~~\text{if}~z~\neq~p \end{cases}
    \end{aligned} \quad\forall~p \in \mathcal{V}_h^C \cup \mathcal{M}_h^C.
    \end{split}
\end{align*}
Define a linear map $\pi_h~:~\b{Q^h} \longrightarrow \b{W_2}$ by
\begin{align} \label{phih}
    \pi_h\b{v^h}~:=\sum_{z \in \mathcal{V}_h^C \cup \mathcal{M}_h^C}\sum_{i=1}^{2}  v^h_i(z)\psi_z\b{e_i}~~\quad\forall~\b{v^h}=(v^h_1,v^h_2) \in \b{Q^h}.
    \end{align}
Clearly, the map $\pi_h$ is well defined and one-one. Since dimension of space $\b{Q^h}$ and $\b{W_2}$ are equal, therefore the map $\pi_h$ is bijective and hence $\pi_h^{-1}~:~\b{W_2} \longrightarrow \b{Q^h}$ exists and is given by
\begin{align*}
    \pi_h^{-1}\b{v^h}~=\sum_{z \in \mathcal{V}_h^C \cup \mathcal{M}_h^C}\sum_{i=1}^{2}  v^h_i(z)\phi_z\b{e_i}~~\quad\forall~\b{v^h}=(v^h_1,v^h_2) \in \b{W_2}.
    \end{align*}
It can be observed that 
 \begin{align}\label{match}
    \pi_h^{-1}\b{v^h}(z)~= \b{v^h}(z)~\forall~{z \in \mathcal{V}_h^C \cup \mathcal{M}_h^C}~ \quad\forall~\b{v^h} \in \b{W_2}.
    \end{align}   

\par
Now, we turn our attention to introduce the discrete contact force density $\b{\lambda^h} \in \b{Q^h}$ which is defined as 
\begin{align}\label{2.8}
\langle \b{\lambda^h}, \b{v^h} \rangle_{\b{h}} = L(\pi_h\b{v^h}) - a(\b{u^h}, \pi_h\b{v^h})~\quad\forall~ \b{v^h}\in\b{Q^h},
\end{align}
where, the inner product $\langle \cdot, \cdot \rangle_{\b{h}}$ on the space $\b{Q^h}$ is defined as
\begin{align*}
  \langle \b{w^h}, \b{v^h} \rangle_{\b{h}}~:= \sum_{z \in \mathcal{V}_h^C \cup \mathcal{M}_h^C} \b{w^h}(z) \cdot \b{v^h}(z) \int_{\gamma_{z,C}} \phi_z~ds .
\end{align*}
Note that, $\b{\lambda^h}$ is well-defined since $\langle \cdot, \cdot \rangle_{\b{h}} $ defines an inner product on $\b{Q^h}$.
In the following lemma, we will establish the properties of discrete contact force density $\b{\lambda^h} \in \b{Q^h}$.
\begin{lemma}\label{lem:disL} The discrete contact force density $\b{\lambda^h}=(\lambda^h_1,~\lambda^h_2) \in \b{Q^h}$ satisfies the following sign properties.
\begin{align*}
    \begin{split}
    \begin{aligned}
\lambda^h_1(p) ~&~\geq 0 \quad \forall p \in \mathcal{V}_h^C \cup \mathcal{M}_h^C,\\
\lambda^h_2(p)~ &=~ 0 \quad \forall p \in \mathcal{V}_h^C \cup \mathcal{M}_h^C.
    \end{aligned}
    \end{split}
\end{align*}

\end{lemma}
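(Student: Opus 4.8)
The plan is to test the defining relation \eqref{2.8} for $\b{\lambda^h}$ against the nodal basis functions $\phi_z\b{e_i}$ of $\b{Q^h}$ and to use the fact that $\pi_h$ maps these basis functions precisely onto the nodal basis functions $\psi_z\b{e_i}$ of $\b{W_2}$, together with the variational (in)equalities \eqref{2.3}, \eqref{2.4} satisfied by $\b{u^h}$. First I would observe that, by the definition of the inner product $\langle\cdot,\cdot\rangle_{\b{h}}$ and the Kronecker-delta property of $\{\phi_z\b{e_i}\}$, testing \eqref{2.8} with $\b{v^h}=\phi_z\b{e_i}$ isolates a single nodal value:
\begin{align*}
\lambda^h_i(z)\int_{\gamma_{z,C}}\phi_z\,ds \;=\; \langle \b{\lambda^h}, \phi_z\b{e_i}\rangle_{\b{h}} \;=\; L(\pi_h(\phi_z\b{e_i})) - a(\b{u^h}, \pi_h(\phi_z\b{e_i})).
\end{align*}
Since $\pi_h(\phi_z\b{e_i}) = \psi_z\b{e_i}$ by \eqref{phih}, the right-hand side is exactly $L(\psi_z\b{e_i}) - a(\b{u^h},\psi_z\b{e_i})$.

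Now I would split into the two components. For $i=2$ and any $z\in\mathcal{V}_h^C\cup\mathcal{M}_h^C$, the second line of \eqref{2.4} gives $a(\b{u^h},\psi_z\b{e_2}) = L(\psi_z\b{e_2})$, so the right-hand side vanishes; since $\int_{\gamma_{z,C}}\phi_z\,ds > 0$ (the basis function $\phi_z$ is nonnegative and not identically zero on the patch $\gamma_{z,C}$), we conclude $\lambda^h_2(z)=0$. For $i=1$, the first line of \eqref{2.4} gives $a(\b{u^h},\psi_z\b{e_1}) \leq L(\psi_z\b{e_1})$, hence $L(\psi_z\b{e_1}) - a(\b{u^h},\psi_z\b{e_1}) \geq 0$, and dividing by the positive quantity $\int_{\gamma_{z,C}}\phi_z\,ds$ yields $\lambda^h_1(z)\geq 0$. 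This is the whole argument.

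The only genuinely non-routine point — and the one I would make sure to state carefully — is the positivity of $\int_{\gamma_{z,C}}\phi_z\,ds$, which is what lets us pass from the sign of the functional to the sign of the nodal value; this is immediate because $\phi_z\geq 0$ on $\overline{\Gamma_C}$ (it is a quadratic-type Lagrange nodal function on the refined one-dimensional mesh of sub-intervals $q_i^1, q_i^2$) and $\phi_z(z)=1$, so the integral over the nonempty patch $\gamma_{z,C}$ is strictly positive. Everything else is just bookkeeping with the definitions of $\pi_h$, the inner product $\langle\cdot,\cdot\rangle_{\b{h}}$, and the already-established identities \eqref{2.3}--\eqref{2.4}. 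I do not anticipate any real obstacle beyond being careful that the test functions $\b{u^h}\pm\psi_z\b{e_2}$ and $\b{u^h}-\psi_z\b{e_1}$ indeed lie in $\b{\cK^h}$, which was already checked in the text preceding \eqref{2.4}.
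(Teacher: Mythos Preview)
Your proposal is correct and follows essentially the same argument as the paper: test \eqref{2.8} against the nodal basis functions $\phi_z\b{e_i}$, use $\pi_h(\phi_z\b{e_i})=\psi_z\b{e_i}$, and invoke \eqref{2.4} together with the positivity of $\int_{\gamma_{z,C}}\phi_z\,ds$. One small terminological slip: $\phi_z$ is a piecewise \emph{linear} hat function on the refined mesh $\{q_i^1,q_i^2\}$ (see \eqref{qh}), not quadratic, but this does not affect the nonnegativity you need.
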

\begin{proof}
The proof of this lemma follows by suitable construction of a test function $\b{v^h} \in \b{Q^h}$. Let $p \in \mathcal{V}_h^C \cup \mathcal{M}_h^C$ be an arbitrary node. We choose a test function $\b{v^h}$ as follows
\begin{align*}
    \begin{split}
    \begin{aligned}
    \b{v^h}(z) = \begin{cases} &(1,0) ~~~\text{if}~ z~=~p, \\
    &(0,~0)~~\text{if}~z ~\neq~ p, \end{cases}
    \end{aligned} \quad \forall z \in \mathcal{V}_h^C \cup \mathcal{M}_h^C.
    \end{split}
\end{align*}
Further, using the definition of $\pi_h$, we have
\begin{align*}
    \pi_h\b{v^h} =\sum_{z \in \mathcal{V}_h^C \cup \mathcal{M}_h^C}(v^h_1(z)\psi_z,~v^h_2(z)\psi_z)=(\psi_p,~0)
    =\psi_p\b{e_1}.
    \end{align*}
Thus, the use of \eqref{2.4} yields
\begin{align}\label{2.9}
    \langle \b{\lambda^h}, \b{v^h} \rangle_{\b{h}}  &= L(\pi_h\b{v^h}) - a(\b{u^h}, \pi_h\b{v^h}) \nonumber \\
    &= L(\psi_p\b{e_1}) - a(\b{u^h}, \psi_p\b{e_1}) \nonumber\\
    &\geq 0,
\end{align}
whereas, using the definition of $\langle \cdot, \cdot \rangle_{\b{h}}$, we find
\begin{align}\label{2.10}
 \langle \b{\lambda^h}, \b{v^h} \rangle_{\b{h}} &= \sum_{z \in \mathcal{V}_h^C \cup \mathcal{M}_h^C} \b{\lambda^h}(z) \cdot \b{v^h}(z) \int_{\gamma_{z,C}} \phi_z~ds\nonumber\\
 &= \b{\lambda^h}(p) \cdot \b{v^h}(p) \int_{\gamma_{p,C}} \phi_p~ds\nonumber\\
 &= {\lambda^h_1}(p)\int_{\gamma_{p,C}} \phi_p~ds .
\end{align}
Combining $\eqref{2.9}$, $\eqref{2.10}$ and taking into account $\int_{\gamma_{p,C}} \phi_p~ds > 0$, we find $\lambda^h_1(p)\geq 0$. Since $p \in \mathcal{V}_h^C \cup \mathcal{M}_h^C$ is arbitrary, it follows that $\lambda^h_1(p)\geq 0~\forall ~p \in \mathcal{V}_h^C \cup \mathcal{M}_h^C$. Analogously for any $p \in \mathcal{V}_h^C \cup \mathcal{M}_h^C$, we define $\b{v^h} \in \b{Q^h}$ such that 
\begin{align*}
\begin{split}
\begin{aligned}
\b{v^h}(z) = \begin{cases} &(0,1) ~~~\text{if}~ z=p \\
&(0,~0)~~\text{if}~z\neq p \end{cases}
\end{aligned} \quad \forall z \in \mathcal{V}_h^C \cup \mathcal{M}_h^C.
\end{split}
\end{align*}
In this case, we have $\pi_h\b{v^h} = \psi_p\b{e_2}$. Therefore, using \eqref{2.4}  we have
\begin{align}\label{3.111}
   \langle \b{\lambda^h}, \b{v^h} \rangle_{\b{h}}  &= L(\pi_h\b{v^h}) - a(u^h, \pi_h\b{v^h}) \nonumber \\
    &= L(\psi_p\b{e_2}) - a(u^h, \psi_p\b{e_2}) \nonumber\\
    &= 0,
\end{align}
and
\begin{align}\label{2.12}
\langle \b{\lambda^h}, \b{v^h} \rangle_{\b{h}} &= \sum_{z \in \mathcal{V}_h^C \cup \mathcal{M}_h^C} \b{\lambda^h}(z) \cdot \b{v^h}(z) \int_{\gamma_{z,C}} \phi_z~ds \nonumber\\
&= \b{\lambda^h}(p) \cdot \b{v^h}(p)\int_{\gamma_{p,C}} \phi_p~ds \nonumber\\
&= {\lambda^h_2}(p)\int_{\gamma_{p,C}} \phi_p~ds.
\end{align} 
Using equation \eqref{3.111} and $ \int_{\gamma_{p,C}} \phi_p> 0$, it follows  ${\lambda^h_2}(p)=0.$ Consequently, it follows   ${\lambda^h_2}(p)=0~\forall~p \in \mathcal{V}_h^C \cup \mathcal{M}_h^C$.
\end{proof}

In order to carry out further analysis, we define the linear residual $\b{R^{lin}} \in \b{V^*}$ as
\begin{align} \label{LRES}
\b{\langle \b{R^{lin}}, \b{\phi} \rangle_{-1,1}}:= L(\b{\phi}) -a (\b{u^h}, \b{\phi}) \quad \forall \b{\phi} \in \b{V}.
\end{align}
\par
\noindent
For any $\b{\phi}=(\phi_1,~\phi_2)\in \b{V}$, the linear residual can be represented as 
\begin{align*}
\b{\langle \b{R^{lin}}, \b{\phi} \rangle_{-1,1}} = \sum_{i=1}^2 \langle {R^{lin}_i}, {\phi_i} \rangle_{-1,1},
\end{align*}
where,
\begin{align*}
  \langle {R^{lin}_1}, {\phi_1} \rangle_{-1,1} &:= L((\phi_1,0)) -a (\b{u^h}, (\phi_1,0)),\\
  \langle R^{lin}_2, {\phi_2} \rangle_{-1,1} &:= L((0,\phi_2)) -a (\b{u^h},(0,\phi_2)).
\end{align*}
Further, for any $\b{\phi^h} \in \b{V^h}$, we have
\begin{align}
\b{\langle \b{R^{lin}}, \b{\phi^h} \rangle_{-1,1}} = L(\b{\phi^h}) -a (\b{u^h}, \b{\phi^h}) \quad \forall~ \b{\phi^h} \in \b{V^h}.
 \end{align}

In particular, we assume $\b{\phi^h}= \psi_z \b{e_i}$ for $z \in \mathcal{V}_h^C \cup \mathcal{M}_h^C$ to derive
\begin{align} \label{3.16}
\langle \b{R^{lin}},  \psi_z \b{e_i} \rangle_{\b{-1,1}} = L( \psi_z \b{e_i}) -a (\b{u^h},  \psi_z \b{e_i}).
\end{align} 
Using equation \eqref{phih}, we have $ \pi_h(\phi_z \b{\b{e_i}})= \psi_z \b{e_i}$ for $i=1,2.$ Finally, using the equations (\ref{2.8}) and (\ref{3.16}), we have the following relation for any $z \in \mathcal{V}_h^C \cup \mathcal{M}_h^C$
\begin{align} \label{3.17}
\langle \b{R^{lin}},  \psi_z \b{e_i} \rangle_{\b{-1,1}} &= L(  \pi_h(\phi_z \b{e_i})) -a (\b{u^h}, \pi_h(\phi_z \b{e_i})), \nonumber \\ & = \langle \b{\lambda^h}, \phi_z\b{e_i} \rangle_{\b{h}}.
\end{align} 
The above relation between $\b{\lambda^h}$ and $\b{R^{lin}}$ plays a key role in later analysis. Let $\b{v^h}=(v^h_1,v^h_2) \in \b{V^h}$, using integration by parts and equation (\ref{3.1}), we find
\begin{align} \label{residual}
\b{\langle \b{R^{lin}},\b{v^h} \rangle_{-1,1}} &=\sum_{i=1}^{2}  \sum_{p \in \mathcal{V}_h^o \cup \mathcal{M}_h^o} \big[L(v^{h}_i(p) \psi_p \b{e_i}) - a(\b{u^h},v^{h}_i(p) \psi_p \b{e_i} ) \big] \nonumber \\ & = \sum_{i=1}^{2}  \sum_{p \in \mathcal{V}_h^o \cup \mathcal{M}_h^o} \int_{\omega_p} (\b{f} + \b{div\sigma (u^h)}) \cdot v^{h}_i(p) \psi_p \b{e_i} ~dx\nonumber \\ & \hspace{0.3cm}- \sum_{i=1}^{2}  \sum_{p \in \mathcal{V}_h^o \cup \mathcal{M}_h^o} \int_{\gamma_{p,I}} \sjump{\b{\sigma(u^h)}} \cdot v^{h}_i(p) \psi_p \b{e_i} ~ds\nonumber \\ & \hspace{0.3cm}+ \sum_{i=1}^{2}  \sum_{p \in \mathcal{V}_h^N \cup \mathcal{M}_h^N} \int_{\gamma_{p,N}} (\b{g}-\b{\sigma(u^h)} \b{n}) \cdot v^{h}_i(p) \psi_p \b{e_i} ~ds\nonumber\\ & \hspace{0.3cm}- \sum_{i=1}^{2}  \sum_{p \in \mathcal{V}_h^C \cup \mathcal{M}_h^C} \int_{\gamma_{p,C}} \b{\sigma(u^h)} \b{n} \cdot v^{h}_i(p) \psi_p \b{e_i}~ds.
\end{align}
Using the equations (\ref{2.3}), (\ref{2.4}), \eqref{2.8} and (\ref{3.17}), we derive important characterizations for $\b{R^{lin}}$
\begin{align}
\langle \b{R^{lin}},  \psi_z \b{e_i} \rangle_{\b{-1,1}} &= 0 \quad \forall z \in  (\mathcal{V}_h^o  \cup \mathcal{M}_h^o) \setminus (\mathcal{V}_h^C \cup \mathcal{M}_h^C),~i=1,2, \label{prop1}\\ \langle \b{R^{lin}},  \psi_z \b{e_2} \rangle_{\b{-1,1}} &=0 \quad \forall z \in  \mathcal{V}_h^C \cup \mathcal{M}_h^C.\label{prop2}
\end{align}
In the subsequent analysis, for the ease of the presentation we abbreviate the interior residual as $\b{r(u^h)} = \b{f} + \b{div} \b{\sigma(u^h)}$. Further, the jump terms which are either the difference between the contact stresses of two neighboring elements or the difference between Neumann data and boundary stress at Neumann boundary or the boundary stresses at contact boundary are abbreviated as 
\begin{itemize}
\item For $e\in \mathcal{E}_h^i$
\begin{align*}
    \b{J^I}(\b{u^h}) := \sjump{\b{\sigma(\b{u^h)}}},
\end{align*}
\item For $e\in \mathcal{E}_h^N$
\begin{align*}
    \b{J^N}(\b{u^h}) := \b{g}-\b{\sigma(u^h)} \b{n},
\end{align*} 
\item For $e\in \mathcal{E}_h^C$
\begin{align*}
\b{J^C_{tan}}(\b{u^h}) := \hat{\b{\sigma_2}}(\b{u^h}).~~~
\end{align*} 
\end{itemize}
\section{Quasi Discrete Contact Force Density} \label{sec4}
\par
In this section, we introduce the quasi discrete contact force density which imitates the properties of continuous contact force density $\b{\lambda}$ but computed using the discrete solution and discrete contact force density. For any $p \in \mathcal{V}_h^C \cup \mathcal{M}_h^C$, we take
the node values of a discrete contact force density obtained by lumping the boundary mass matrix and define
\begin{align}\label{def:sp}
\b{s_p}= (s^1_p,s^2_p),
\end{align}
where $s^1_p:= \frac{\langle \b{\lambda^{h}}, \phi_p \b{e_1} \rangle_{\b{h}}}{ \int_{\gamma_{p,C}} \phi_p~ds}= \lambda^{h}_1(p)$ and $s^2_p:=\lambda^{h}_2(p)=0.$ Next, with the help of \eqref{def:sp}, we introduce the quasi discrete contact force density $\b{\tilde{\lambda}^{h}} \in \b{V^*}$ in the following way
\begin{align} \label{3.30}
\b{\langle \b{\tilde{\lambda}^{h}} , \b{v} \rangle_{-1,1}} := \sum_{i=1}^{2} \langle \tilde{\lambda}^{h}_i , v_i \rangle_{-1,1} \quad \forall \b{v} \in \b{V},
\end{align}
where, for $i=1,2$
\begin{align}\label{aaaa}
\langle \tilde{\lambda}^{h}_i , v_i \rangle_{-1,1} := \int_{\Gamma_C} \bigg(\sum_{z \in \mathcal{V}_h^C \cup \mathcal{M}_h^C} s^i_z \phi_z \bigg) v_i~ds.
\end{align}
We derive the sign property for $\b{\tilde{\lambda}^{h}}$ and a useful preliminary result in the next two lemmas, respectively.
\begin{lemma} \label{sign1}
It holds that
\begin{align}
\b{\langle \b{\tilde{\lambda}^{h}} , \b{v} \rangle_{-1,1}} = \langle \tilde{\lambda}^{h}_1, v_1 \rangle_{-1,1} \geq 0 \quad \text{whenever}~~ v_1 \geq 0,
\end{align}
where $\b{v}=(v_1,v_2) \in \b{V}.$
\begin{proof}
Using the definition (\ref{3.30}) and the fact that $s^2_p=0~ \forall~p \in \mathcal{V}_h^C \cup \mathcal{M}_h^C  $, we obtain
\begin{align*}
\langle \b{\tilde{\lambda}^{h}} , \b{v} \rangle_{\b{-1,1}} = \langle \tilde{\lambda}^{h}_1, v_1 \rangle_{-1,1}.\end{align*}
 Next, we will prove for any $v_1 \geq 0$, we have $\langle \tilde{\lambda}^{h}_1, v_1 \rangle_{-1,1} \geq 0.$ 
 Using Lemma \ref{lem:disL}, we deduce that $s^1_p = \lambda^h_1(p) \geq 0$ and the equation (\ref{3.17}) yields
 \begin{align}\label{i0i}
0 \leq  s^1_p &= \frac{\langle \b{\lambda^{h}}, \phi_p \b{e_1} \rangle_{\b{h}}}{ \int_{\gamma_{p,C}} \phi_p~ds} = \frac{\langle \b{R^{lin}},  \psi_p \b{e_1} \rangle_{\b{-1,1}} }{ \int_{\gamma_{p,C}} \phi_p~ds}.
 \end{align}
 Moreover, from \eqref{aaaa} we deduce that
 \begin{align} \label{3.23}
 \langle \tilde{\lambda}^{h}_1, v_1 \rangle_{-1,1} = \sum_{p \in \mathcal{V}_h^C \cup \mathcal{M}_h^C}  \langle \b{R^{lin}},  \psi_p \b{e_1} \rangle_{\b{-1,1}}  c_p(v_1),
  \end{align}where $c_p(v_1)= \frac{\int_{\gamma_{p,C}} v_1 \phi_p~ds}{\int_{\gamma_{p,C}} \phi_p~ds }$. 
  Since $c_p(v_1) \geq 0 ~\forall~ v_1 \geq 0$, thus combining \eqref{i0i} and \eqref{3.23} yields the desired result.
\end{proof}
\end{lemma}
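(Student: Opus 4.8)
The plan is to split the argument into two steps: first, reduce the duality pairing $\b{\langle \b{\tilde{\lambda}^{h}}, \b{v}\rangle_{-1,1}}$ to its first scalar component; second, establish the sign of that component.

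For the reduction I would invoke the definition \eqref{3.30}--\eqref{aaaa} of $\b{\tilde{\lambda}^{h}}$ together with Lemma \ref{lem:disL}. Since $\lambda^h_2(p)=0$ for every $p\in\mathcal{V}_h^C\cup\mathcal{M}_h^C$, the nodal constants $s^2_p$ of \eqref{def:sp} all vanish, hence $\langle\tilde{\lambda}^{h}_2,v_2\rangle_{-1,1}=\int_{\Gamma_C}\big(\sum_{z}s^2_z\phi_z\big)v_2\,ds=0$ for every $\b{v}=(v_1,v_2)\in\b{V}$, and therefore $\b{\langle \b{\tilde{\lambda}^{h}}, \b{v}\rangle_{-1,1}}=\langle\tilde{\lambda}^{h}_1,v_1\rangle_{-1,1}$.

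For the sign I would start from \eqref{aaaa}, using that each nodal basis function $\phi_z$ of $\b{Q^h}$ is supported in $\gamma_{z,C}$, to write $\langle\tilde{\lambda}^{h}_1,v_1\rangle_{-1,1}=\sum_{z\in\mathcal{V}_h^C\cup\mathcal{M}_h^C}s^1_z\int_{\gamma_{z,C}}v_1\,\phi_z\,ds$, and then argue termwise: $s^1_z=\lambda^h_1(z)\ge 0$ by Lemma \ref{lem:disL}, while $\int_{\gamma_{z,C}}v_1\,\phi_z\,ds\ge 0$ because $v_1\ge 0$ and the piecewise-linear nodal function $\phi_z$ is nonnegative on each subinterval $q_i^j$; summing the nonnegative terms gives the claim. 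An equivalent route, closer to the residual formalism already in place, is to use \eqref{3.17} and then \eqref{2.4} to get $s^1_z\int_{\gamma_{z,C}}\phi_z\,ds=\langle\b{\lambda^h},\phi_z\b{e_1}\rangle_{\b{h}}=\langle\b{R^{lin}},\psi_z\b{e_1}\rangle_{\b{-1,1}}=L(\psi_z\b{e_1})-a(\b{u^h},\psi_z\b{e_1})\ge 0$, and then to rewrite the sum as $\sum_{z}\langle\b{R^{lin}},\psi_z\b{e_1}\rangle_{\b{-1,1}}\,c_z(v_1)$ with weights $c_z(v_1)=\big(\int_{\gamma_{z,C}}v_1\,\phi_z\,ds\big)\big/\big(\int_{\gamma_{z,C}}\phi_z\,ds\big)\ge 0$.

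This is essentially a bookkeeping exercise, so I do not expect a real obstacle. The only points needing a little care are the nonnegativity of $\phi_z$ (equivalently of the weights $c_z(v_1)$), which has to be read off from the explicit description of $\b{Q^h}$ as the continuous piecewise-linear space on the refined partition $\{q_i^j\}$ of $\Gamma_C$, and the strict positivity of $\int_{\gamma_{z,C}}\phi_z\,ds$, which makes the normalization in \eqref{def:sp} well posed.
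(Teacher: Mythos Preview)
Your proposal is correct and matches the paper's argument essentially line for line: the reduction to the first component via $s^2_p=0$ is identical, and your ``equivalent route'' through $\langle\b{R^{lin}},\psi_z\b{e_1}\rangle_{\b{-1,1}}$ and the weights $c_z(v_1)$ is exactly what the paper writes. Your more direct termwise argument ($s^1_z\ge 0$ and $\phi_z\ge 0$) is the same idea without the detour through the residual notation.
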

\begin{lemma} \label{sign2}
The following holds
\begin{align}
\langle \tilde{\lambda}^{h}_1 , v_1 \rangle_{-1,1} = \sum_{p \in \mathcal{V}_h^C \cup \mathcal{M}_h^C} \langle \tilde{\lambda}^{h}_1, v_1 \psi_p \rangle_{-1,1},
\end{align}
where $\b{v}=(v_1,v_2) \in \b{V}.$
\end{lemma}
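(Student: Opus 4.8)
The plan is to obtain the identity directly from the defining formula \eqref{aaaa}, using the single structural fact that the quadratic nodal basis functions attached to the contact nodes form a partition of unity on $\Gamma_C$.

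First I would establish that
\[
\sum_{p \in \mathcal{V}_h^C \cup \mathcal{M}_h^C} \psi_p \equiv 1 \quad \text{on } \Gamma_C .
\]
To see this, fix a boundary edge $e \in \mathcal{E}_h^C$; it is an edge of a unique triangle $T \in \mathcal{T}_h$, whose six Lagrange nodes include the two vertices $\mathcal{V}_e$ of $e$ and the midpoint $\mathcal{M}_e$ of $e$. If $p$ is not a node of $T$, then $\psi_p$ vanishes identically on $T$; if $p$ is a node of $T$ but $p \notin \mathcal{V}_e \cup \{\mathcal{M}_e\}$, then $\psi_p$ vanishes at all three nodes of $T$ lying on $e$ and, being a quadratic polynomial on $e$, vanishes identically on $e$. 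Hence only the three functions $\psi_p|_e$ with $p \in \mathcal{V}_e \cup \{\mathcal{M}_e\}$ are nontrivial on $e$, and these are exactly the one–dimensional quadratic Lagrange basis functions on $e$, whose sum is the constant $1$. Since $e \in \mathcal{E}_h^C$ is arbitrary and the $\psi_p$ are globally continuous, the displayed identity follows. Note that every node of a contact edge belongs to $\mathcal{V}_h^o \cup \mathcal{M}_h^o$ because $\overline{\Gamma}_C \cap \overline{\Gamma}_D = \emptyset$, so the basis functions $\psi_p$ appearing above are all well defined and no term is missing from the sum.

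With this in hand, I would expand the right–hand side of the asserted equality using \eqref{aaaa}. Since the sum over $p$ is finite it may be pulled inside the integral, which gives
\[
\sum_{p \in \mathcal{V}_h^C \cup \mathcal{M}_h^C} \langle \tilde{\lambda}^{h}_1, v_1 \psi_p \rangle_{-1,1}
= \int_{\Gamma_C} \Big( \sum_{z \in \mathcal{V}_h^C \cup \mathcal{M}_h^C} s^1_z \phi_z \Big) v_1 \Big( \sum_{p \in \mathcal{V}_h^C \cup \mathcal{M}_h^C} \psi_p \Big) ds
= \int_{\Gamma_C} \Big( \sum_{z \in \mathcal{V}_h^C \cup \mathcal{M}_h^C} s^1_z \phi_z \Big) v_1 \, ds ,
\]
and the last expression is $\langle \tilde{\lambda}^{h}_1, v_1 \rangle_{-1,1}$ by \eqref{aaaa} once more. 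The only point requiring a word of care is that $v_1 \psi_p$ is an admissible argument of $\langle \tilde{\lambda}^{h}_1 , \cdot \rangle_{-1,1}$: this holds because $v_1 \in H^1(\Omega)$ and $\psi_p$ is Lipschitz, so $v_1 \psi_p \in H^1(\Omega)$ and its trace on $\Gamma_C$ is square integrable against the piecewise linear density $\sum_z s^1_z \phi_z$. Beyond the elementary partition–of–unity bookkeeping in the first step there is no genuine obstacle, so I expect the proof to be short.
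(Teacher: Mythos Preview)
Your proposal is correct and takes essentially the same approach as the paper: both arguments rest on the fact that the quadratic Lagrange basis functions associated with the contact nodes form a partition of unity on $\Gamma_C$, and then insert this into the defining formula \eqref{aaaa}. The only cosmetic difference is that the paper first invokes the global partition of unity over all of $\mathcal{V}_h^o \cup \mathcal{M}_h^o$ and then observes that $\psi_p|_{\Gamma_C}=0$ for non-contact nodes, whereas you establish the restricted identity $\sum_{p \in \mathcal{V}_h^C \cup \mathcal{M}_h^C} \psi_p \equiv 1$ on $\Gamma_C$ directly; the content is the same.
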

\begin{proof}
Using the partition of unity \cite{BScott:2008:FEM}, we have 
\begin{align}\label{131}
    \langle \tilde{\lambda}^{h}_1, v_1 \rangle_{-1,1} &= \sum_{p \in \mathcal{V}_h^o \cup \mathcal{M}_h^o}\langle \tilde{\lambda}^{h}_1, v_1\psi_p \rangle_{-1,1} \nonumber\\
    &=\sum_{p \in \mathcal{V}_h^o \cup \mathcal{M}_h^o}\int_{\Gamma_C} \bigg(\sum_{z \in \mathcal{V}_h^C \cup \mathcal{M}_h^C} s^1_z \phi_z \bigg) v_1\psi_p~ds.
\end{align}
Since $\psi_p$ on $\Gamma_C$ will be non-zero only for the nodes on $\overline{\Gamma_C}$.
Thus, \eqref{131} reduces to 
\begin{align*}
    \langle \tilde{\lambda}^{h}_1, v_1 \rangle_{-1,1} &= \sum_{p \in \mathcal{V}_h^C \cup \mathcal{M}_h^C}\int_{\Gamma_C} \bigg(\sum_{z \in \mathcal{V}_h^C \cup \mathcal{M}_h^C} s^1_z \phi_z \bigg) v_1\psi_p~ds, \\
    &= \sum_{p \in \mathcal{V}_h^C \cup \mathcal{M}_h^C} \langle \tilde{\lambda}^{h}_1, v_1 \psi_p \rangle_{-1,1}.
\end{align*}
\end{proof}
Next, we categorize  actual contact nodes $p \in \mathcal{V}_h^C \cup \mathcal{M}_h^C$ \big($u^{h}_1(p)=0$\big) in two different categories.
\begin{enumerate}
\item Full contact nodes $\mathcal{N}^{FC}_h:=\{p \in \mathcal{V}_h^C \cup \mathcal{M}_h^C~|~u^{h}_1= 0~\text{on}~\gamma_{p,C}\}$.
\item The remaining actual contact nodes are called semi contact nodes and denoted by $\mathcal{N}^{SC}_h$.
\end{enumerate}
\par
\noindent
Denote $\mathcal{N}_h^{NC}$ as the set of no actual contact nodes, i.e., for $p \in \mathcal{N}_h^{NC}$, $u^{h}_1(p) \neq 0$.
\par
Next, we derive an important property of $\b{\tilde{\lambda}^{h}}$ with the help of upcoming lemma.
\begin{lemma} \label{sign}
It holds that
\begin{align} \label{signn}
\langle \b{\lambda^{h}}, \phi_p \b{e_1} \rangle_{\b{h}} =0 \quad \forall p \in \mathcal{N}_h^{NC}.
\end{align}
\end{lemma}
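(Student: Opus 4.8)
The plan is to exploit the strict inequality that holds at a non-contact node together with the discrete variational inequality \eqref{2.1}, and then transfer the conclusion to $\b{\lambda^h}$ via the identity \eqref{3.17}. First I would observe that if $p \in \mathcal{N}_h^{NC}$, then since $\b{u^h} \in \b{\cK^h}$ we have $u^h_1(p) \leq 0$, and by the definition of $\mathcal{N}_h^{NC}$ we have $u^h_1(p) \neq 0$; hence $u^h_1(p) < 0$ strictly. This strictness is the crucial point: it leaves room to move the nodal value at $p$ slightly in either direction without leaving $\b{\cK^h}$.

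Next I would fix $\varepsilon$ with $0 < \varepsilon < |u^h_1(p)|$ and set $\b{v^h_\pm} := \b{u^h} \pm \varepsilon\,\psi_p\b{e_1} \in \b{V^h}$. I claim $\b{v^h_\pm} \in \b{\cK^h}$: at the node $p$ the first component equals $u^h_1(p) \pm \varepsilon$, which is $< 0$ by the choice of $\varepsilon$, while at every other node $z \in (\mathcal{V}_h^C \cup \mathcal{M}_h^C)\setminus\{p\}$ the first component is unchanged and hence still $\leq 0$, and the second component is untouched. Plugging $\b{v^h_+}$ and then $\b{v^h_-}$ into \eqref{2.1} gives $\pm\varepsilon\big(a(\b{u^h},\psi_p\b{e_1}) - L(\psi_p\b{e_1})\big) \geq 0$, and since both signs occur we conclude $a(\b{u^h},\psi_p\b{e_1}) = L(\psi_p\b{e_1})$, i.e.\ $\langle \b{R^{lin}}, \psi_p\b{e_1}\rangle_{\b{-1,1}} = 0$ by the definition \eqref{LRES} of the linear residual restricted to $\b{V^h}$.

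Finally, the identity \eqref{3.17} states that $\langle \b{R^{lin}}, \psi_p\b{e_1}\rangle_{\b{-1,1}} = \langle \b{\lambda^h}, \phi_p\b{e_1}\rangle_{\b{h}}$, so the vanishing of the left-hand side yields $\langle \b{\lambda^h}, \phi_p\b{e_1}\rangle_{\b{h}} = 0$; since $p \in \mathcal{N}_h^{NC}$ was arbitrary, this is the claim. I do not anticipate a genuine obstacle here: the whole argument rests on the single observation that $u^h_1(p)$ is strictly negative at a non-contact node, which makes the test functions $\b{u^h} \pm \varepsilon\psi_p\b{e_1}$ admissible for small $\varepsilon$, and the only point requiring care is to record that this bidirectional perturbation upgrades the inequality \eqref{2.1} to an equality.
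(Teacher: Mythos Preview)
Your proof is correct and follows essentially the same approach as the paper: exploit the strict negativity $u^h_1(p)<0$ at a non-contact node to perturb $\b{u^h}$ in the direction $\psi_p\b{e_1}$ while remaining in $\b{\cK^h}$, deduce the equality $a(\b{u^h},\psi_p\b{e_1})=L(\psi_p\b{e_1})$, and then invoke \eqref{3.17}. The only cosmetic difference is that the paper uses the one-sided perturbation $\b{u^h}+\kappa\psi_p\b{e_1}$ to get $a(\b{u^h},\psi_p\b{e_1})\geq L(\psi_p\b{e_1})$ and then appeals to the already-established inequality \eqref{2.4} for the reverse direction, whereas you run both perturbations $\pm\varepsilon\psi_p\b{e_1}$ directly; the content is the same.
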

\begin{proof}
Let $p$ be any non actual contact node, i.e., we have $u^{h}_1(p) < 0$. We note for sufficiently small $\kappa>0$ such that $ 0< \kappa < -{u^h_1(p)}$, we have $\b{v^h} = \b{u^h}+ \kappa \psi_p\b{e_1} \in \b{\cK^h}$. Using \eqref{2.1}, we conclude
\begin{align}
a(\b{u^h}, {\psi_p\b{e_1}})~ \geq ~L({\psi_p\b{e_1}}).
\end{align}
Finally, we have $\langle \b{\lambda^h}, \phi_p\b{e_1} \rangle_{\b{h}}=0$ in view of \eqref{3.17} and \eqref{2.4}.
\end{proof}
{
\begin{remark}\label{rem}
From the last lemma, we deduce that $s^1_p := \frac{\langle \b{\lambda^{h}}, \phi_p\b{e_1}\rangle_{\b{h}}}{\int_{\gamma_{p,C}}\phi_p~ds} = 0~~\forall p \in \mathcal{N}_h^{NC}.$ Thus, using \eqref{aaaa} we obtain,
\begin{equation*}
\langle \tilde{\lambda}^{h}_1 , v_1 \rangle_{-1,1} = \int_{\Gamma_C} \bigg( \underset{z \in \mathcal{V}_h^C \cup \mathcal{M}_h^C}{\sum} s^1_z \phi_z \bigg) v_1~ds \\ = \int_{\Gamma_C} \bigg(\underset{z \in \mathcal{N}^{FC}_h \cup \mathcal{N}^{SC}_h}{\sum}  s^1_z \phi_z\bigg) v_1 ~ds,   
\end{equation*}
for any $\b{v}=(v_1,v_2) \in \b{V}.$
\end{remark}}
Further, for $\b{\upsilon}=(\upsilon_1, \upsilon_2)\in \b{V}$, we will introduce the constants $c_p(\upsilon_i)$ for any $p \in \mathcal{V}_h \cup \mathcal{M}_h$ defined such that they fulfills $L^2$ approximation properties. For all the non-contact nodes and all contact nodes with $i\neq 1$, we define the constants 
\begin{align}\label{1111}
    c_p(\upsilon_i)= \dfrac{\int_{\omega_p}{\upsilon_i\psi_p~dx}}{\int_{\omega_p}{\psi_p~dx}}
\end{align}
and for semi contact and full contact nodes, the constants $c_p(\upsilon_1)$ are chosen such that
\begin{align}\label{2222}
    c_p(\upsilon_1)= \dfrac{\int_{\tilde{\gamma}_{p,C}}{\upsilon_1\phi_p~ds}}{\int_{\tilde{\gamma}_{p,C}}{\phi_p~ds}}
\end{align}
where $\tilde{\gamma}_{p,C}$ is a proper subset of ${\gamma}_{p,C}$ such that it contains $p$ and for any two different nodes $p_1$ and $p_2$ in $\gamma_{p,C}$,  $\tilde{\gamma}_{p_1,C} \cap \tilde{\gamma}_{p_2,C} = \phi.$ These constants are helpful in deriving the lower bound of the error estimator. Also, we have the following approximation properties \cite{Krause:2015:apost_Sig}.
\begin{lemma}\label{sign3}
Let $\upphi$ be an arbitrary function in $H^1(\Omega)$ and $c_p(\upphi)$ be one of the mean values defined in \eqref{1111} and \eqref{2222}. Then, we have the following $L^2$-approximation properties.
\begin{align*}
\|\upphi - c_p(\upphi)\|_{L^2(\omega_p)} &\lesssim h_p\|\nabla \upphi\|_{L^2(\omega_p)}, \\
\|\upphi - c_p(\upphi)\|_{L^2(\gamma_p)} &\lesssim h_p^{1/2}\|\nabla \upphi\|_{L^2(\omega_p)},
\end{align*}
where $\gamma_p$ is either $\gamma_{p,I}, \gamma_{p,N} \text{~or~} \gamma_{p,C}.$
\end{lemma}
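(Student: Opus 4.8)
The plan is to prove both estimates as scaled generalized Poincar\'e inequalities. The structural fact that makes this work, valid for either prescription \eqref{1111} or \eqref{2222}, is that $c_p(\cdot)$ reproduces constants: $c_p(c)=c$ for every $c\in\mathbb R$, since numerator and denominator in the defining quotients carry the same weight (namely $\psi_p$ over $\omega_p$, respectively $\phi_p$ over $\tilde\gamma_{p,C}$) and the corresponding integral of that weight is nonzero. Hence for any constant $c$ we have $\upphi-c_p(\upphi)=(\upphi-c)-c_p(\upphi-c)$, so it suffices to control the linear map $I-c_p$ on the quotient space $H^1/\mathbb R$.

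First I would pass to a reference configuration. Shape-regularity of $\mathcal T_h$ bounds, uniformly in $h$ and in $p$, the number of triangles meeting $p$ and keeps their shapes away from degeneracy; dilating $\omega_p$ by $h_p^{-1}$ and applying a rigid motion produces a patch $\widehat\omega_p$ of unit diameter made of shape-regular triangles, and these rescaled patches range over a family on which the constants below may be taken uniform. On $\widehat\omega_p$ the functional $c_p$ is bounded on $H^1(\widehat\omega_p)$ — directly in case \eqref{1111}, and via the trace theorem in case \eqref{2222}, where $c_p(\widehat\upphi)$ is an average of $\widehat\upphi$ over a piece of $\partial\widehat\omega_p$ — so that $I-c_p\colon H^1(\widehat\omega_p)\to L^2(\widehat\omega_p)$ is bounded and annihilates constants. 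The Bramble--Hilbert (Deny--Lions) lemma \cite{BScott:2008:FEM} then yields
\begin{equation*}
\|\widehat\upphi-c_p(\widehat\upphi)\|_{L^2(\widehat\omega_p)}\ \lesssim\ |\widehat\upphi|_{H^1(\widehat\omega_p)},
\end{equation*}
and transforming back to $\omega_p$ — using that in two dimensions the $L^2$-norm scales by $h_p$ while the $H^1$-seminorm is scale invariant, together with $|\upphi|_{H^1(\omega_p)}=\|\nabla\upphi\|_{L^2(\omega_p)}$ — gives the first asserted bound $\|\upphi-c_p(\upphi)\|_{L^2(\omega_p)}\lesssim h_p\|\nabla\upphi\|_{L^2(\omega_p)}$.

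For the second estimate I would combine the first one with a scaled trace inequality. Writing $\gamma_p$ as the union of its finitely many edges $e$, each an edge of some $T\subset\omega_p$, Lemma \ref{lem:Discrete} applied to $\upphi-c_p(\upphi)$ gives
\begin{equation*}
\|\upphi-c_p(\upphi)\|_{L^2(e)}^2\ \lesssim\ h_T^{-1}\|\upphi-c_p(\upphi)\|_{L^2(T)}^2+h_T\,|\upphi|_{H^1(T)}^2,
\end{equation*}
where $|\upphi-c_p(\upphi)|_{H^1(T)}=|\upphi|_{H^1(T)}$ because $c_p(\upphi)$ is constant. Summing over the edges of $\gamma_p$, using $h_T\simeq h_p$ on the patch (local quasi-uniformity, again a consequence of shape-regularity), and inserting the first estimate gives
\begin{equation*}
\|\upphi-c_p(\upphi)\|_{L^2(\gamma_p)}^2\ \lesssim\ h_p^{-1}\,h_p^{2}\,\|\nabla\upphi\|_{L^2(\omega_p)}^2+h_p\,\|\nabla\upphi\|_{L^2(\omega_p)}^2\ \lesssim\ h_p\,\|\nabla\upphi\|_{L^2(\omega_p)}^2,
\end{equation*}
and a square root finishes the proof.

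The step I expect to require the most care is the uniformity of the Poincar\'e/Bramble--Hilbert constant across the whole family of rescaled patches $\widehat\omega_p$: one must argue that shape-regularity alone controls this constant, since it simultaneously bounds the number of elements in each patch and prevents the elements from degenerating, so that the rescaled patches form a family compact enough for the constant to be finite and independent of $h$. A detailed treatment of precisely this point is carried out in \cite{Krause:2015:apost_Sig}, and the argument above is its adaptation to the present notation.
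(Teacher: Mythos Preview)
Your proof is correct. The paper does not actually prove Lemma~\ref{sign3}; it simply states the result and attributes it to \cite{Krause:2015:apost_Sig}. Your argument---constant reproduction of $c_p$, a Bramble--Hilbert/Deny--Lions step on a rescaled reference patch, and the scaled trace inequality of Lemma~\ref{lem:Discrete} for the boundary estimate---is exactly the standard route, and you rightly flag the uniformity of the Poincar\'e constant over shape-regular patches as the delicate point handled in \cite{Krause:2015:apost_Sig}.
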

\section{A posteriori Error Analysis}\label{sec:Apoestriori}
In this section, we turn our attention to analyze reliability and efficiency of a posteriori error estimator. We begin by introducing the following contributions of the error estimator
\begin{align*}
    \eta_1 :=&\bigg(\sum_{p \in \mathcal{V}_h^o \cup \mathcal{M}_h^o} \eta_{1,p}^2\bigg)^{1/2},~~~~~~~\eta_{1,p}:=h_p\|\b{f} + \b{div} \b{\sigma}(\b{u^h})\|_{\b{L^2}(\omega_p)},\\
    \eta_2 :=&\bigg(\sum_{p \in \mathcal{V}_h^o \cup \mathcal{M}_h^o} \eta_{2,p}^2\bigg)^{1/2},~~~~~~\eta_{2,p}:=h_p^{\frac{1}{2}}\|~\sjump{\b{\sigma}(\b{u^h})}~ \|_{\b{L^2}(\gamma_{p,I})},\\
    \eta_3 :=&\bigg(\sum_{p \in \mathcal{V}_h^N \cup \mathcal{M}_h^N} \eta_{3,p}^2\bigg)^{1/2},~~~~\eta_{3,p}:= h_p^{\frac{1}{2}} \|\b{\sigma}(\b{u^h})\b{n}- \b{g}\|_{\b{L^2}(\gamma_{p,N})},\\
    \eta_4 :=&\bigg(\sum_{p \in \mathcal{V}_h^C \cup \mathcal{M}_h^C} \eta_{4,p}^2\bigg)^{1/2},~~~~\eta_{4,p}:=
     h_p^{\frac{1}{2}}\|\hat{\sigma}_2(\b{u^h})\|_{\b{L^2}(\gamma_{p,C})},\\
    \eta_5 :=&\bigg(\sum_{p \in \mathcal{V}_h^C \cup \mathcal{M}_h^C} \eta_{5,p}^2\bigg)^{1/2},~~~~\eta_{5,p}:=
     h_p^{\frac{1}{2}}\|\hat{\sigma}_1(\b{u^h})\|_{\b{L^2}(\gamma_{p,C})},\\
    \eta_6 :=&\bigg(\sum_{p \in \mathcal{N}^{SC}_h} \eta_{6,p}^2\bigg)^{1/2},~~~~~~~~~~\eta_{6,p}:=(s^1_pd_p)^{\frac{1}{2}}, \\
    \eta_7 := &~\|(u^h_1)^{+}\|_{H^{\frac{1}{2}}(\Gamma_C)}, \\
    \end{align*}
    where $d_p:= \int_{\tilde{\gamma}_{p,C}} (- u^h_1)^+\phi_p~ds$.
Let $\eta_h$ denotes the total residual estimator and is defined by 
   \begin{align} \label{esti}
    \eta_h^2=\eta_1^2+\eta_2^2+\eta_3^2+\eta_4^2+\eta_5^2+\eta_6^2+\eta_7^2.
   \end{align}

The following subsection guarantees the reliability of the error estimator $\eta_h$.
\subsection{Reliability of the error estimator}
\par
\noindent
Define the Galerkin functional $\b{G_h} : \b{V} \longrightarrow \mathbb{R}$ by 
\begin{align}\label{4.1}
    \b{G_h}(\b{v}) := a(\b{u}-\b{u^h},~\b{v}) + \b{\langle \b{\lambda}- \b{\tilde{\lambda}^{h}},\b{v} \rangle_{-1,1}} ~\forall~ \b{v}~\in~{\b{V}}.
\end{align}
In the following lemma, we will observe the relation {between the true error and the Galerkin functional}.
\begin{lemma} \label{lem:Rel}
It holds that
\begin{align*}
 \|\b{u-u^h}\|^2_{\b{H^1(\Omega)}} + \|\b{\lambda}- \b{\tilde{\lambda}^{h}}\|^2_{\b{H^{-1}(\Omega)}} \leq C_1 \|\b{G_h}\|^2_{\b{V^*}} + C_2 \b{\langle \b{\tilde{\lambda}^{h}}- \b{\lambda}, \b{u-u^h} \rangle_{-1,1}},
\end{align*}
where $C_1$ and $C_2$ are generic constants.
\end{lemma}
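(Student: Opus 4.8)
The plan is to bound the two terms on the left-hand side separately, using coercivity of $a(\cdot,\cdot)$ for the displacement part and a dual characterization for the multiplier part, and then to recombine. First I would invoke the $\b{V}$-ellipticity of $a(\cdot,\cdot)$ (Korn's inequality together with the positivity of the elasticity tensor $A$): there is $\alpha>0$ with $\alpha\|\b{u}-\b{u^h}\|^2_{\b{H^1(\Omega)}} \leq a(\b{u}-\b{u^h},\b{u}-\b{u^h})$. Evaluating the Galerkin functional \eqref{4.1} at $\b{v}=\b{u}-\b{u^h}$ gives
\begin{align*}
a(\b{u}-\b{u^h},\b{u}-\b{u^h}) = \b{G_h}(\b{u}-\b{u^h}) - \b{\langle \b{\lambda}-\b{\tilde{\lambda}^{h}},\b{u}-\b{u^h} \rangle_{-1,1}},
\end{align*}
so that $\alpha\|\b{u}-\b{u^h}\|^2_{\b{H^1(\Omega)}} \leq \|\b{G_h}\|_{\b{V^*}}\|\b{u}-\b{u^h}\|_{\b{H^1(\Omega)}} + \b{\langle \b{\tilde{\lambda}^{h}}-\b{\lambda},\b{u}-\b{u^h} \rangle_{-1,1}}$, and a Young's inequality on the first product absorbs $\tfrac{\alpha}{2}\|\b{u}-\b{u^h}\|^2_{\b{H^1(\Omega)}}$ to the left, leaving $\|\b{u}-\b{u^h}\|^2_{\b{H^1(\Omega)}} \lesssim \|\b{G_h}\|^2_{\b{V^*}} + \b{\langle \b{\tilde{\lambda}^{h}}-\b{\lambda},\b{u}-\b{u^h} \rangle_{-1,1}}$.

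Next I would handle $\|\b{\lambda}-\b{\tilde{\lambda}^{h}}\|^2_{\b{H^{-1}(\Omega)}}$. By definition of the dual norm, pick $\b{w}\in\b{V}$ with $\|\b{w}\|_{\b{H^1(\Omega)}}=1$ nearly attaining the supremum; then from \eqref{4.1}, $\b{\langle \b{\lambda}-\b{\tilde{\lambda}^{h}},\b{w} \rangle_{-1,1}} = \b{G_h}(\b{w}) - a(\b{u}-\b{u^h},\b{w}) \leq \|\b{G_h}\|_{\b{V^*}} + \|a\|\,\|\b{u}-\b{u^h}\|_{\b{H^1(\Omega)}}$, using boundedness of the bilinear form. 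Hence $\|\b{\lambda}-\b{\tilde{\lambda}^{h}}\|_{\b{H^{-1}(\Omega)}} \lesssim \|\b{G_h}\|_{\b{V^*}} + \|\b{u}-\b{u^h}\|_{\b{H^1(\Omega)}}$, and squaring (with $2ab\le a^2+b^2$) gives $\|\b{\lambda}-\b{\tilde{\lambda}^{h}}\|^2_{\b{H^{-1}(\Omega)}} \lesssim \|\b{G_h}\|^2_{\b{V^*}} + \|\b{u}-\b{u^h}\|^2_{\b{H^1(\Omega)}}$.

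Finally I would add the two estimates; the $\|\b{u}-\b{u^h}\|^2_{\b{H^1(\Omega)}}$ appearing on the right of the multiplier bound is controlled by the displacement estimate already proved, so it too reduces to $\|\b{G_h}\|^2_{\b{V^*}}$ plus a constant times $\b{\langle \b{\tilde{\lambda}^{h}}-\b{\lambda},\b{u}-\b{u^h} \rangle_{-1,1}}$. Collecting constants yields the claimed inequality with appropriate $C_1, C_2$. The only genuinely delicate point is making sure the cross term $\b{\langle \b{\tilde{\lambda}^{h}}-\b{\lambda},\b{u}-\b{u^h} \rangle_{-1,1}}$ is kept intact rather than bounded (it is \emph{not} sign-definite here and will be estimated separately later using the sign properties from Lemmas \ref{sign1}, \ref{sign}, \ref{eq:SCT} and \eqref{eq:POOI}), so I must avoid any step that would force me to absorb or sign-control it prematurely; everything else is coercivity, continuity, and Young's inequality bookkeeping.
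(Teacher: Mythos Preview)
Your proposal is correct and follows essentially the same route as the paper: coercivity of $a(\cdot,\cdot)$ plus the definition \eqref{4.1} evaluated at $\b{v}=\b{u}-\b{u^h}$ and Young's inequality give the displacement bound, and then the dual characterization $\b{\langle \b{\lambda}-\b{\tilde{\lambda}^{h}},\b{w}\rangle_{-1,1}}=\b{G_h}(\b{w})-a(\b{u}-\b{u^h},\b{w})$ together with continuity of $a(\cdot,\cdot)$ yields the multiplier bound. The paper's presentation is slightly terser (it writes the supremum directly rather than picking a near-maximizer), but the argument is identical.
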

\begin{proof}
Using the $\b{V}$- ellipticity of the bilinear form $a( \cdot,\cdot)$ and  equation $\eqref{4.1}$, we find
\begin{align*}
  \alpha \|\b{u-u^h}\|^2_{\b{H^1(\Omega)}} &\leq a(\b{u-u^h},~\b{u-u_h})\\
   &=  \b{G_h}(\b{u-u^h})  + \b{\langle \b{\tilde{\lambda}^{h}}- \b{\lambda}, \b{u-u^h} \rangle_{-1,1}}  \\
   &\lesssim \|\b{G_h}\|_{\b{V^*}}\|\b{u-u^h}\|_{\b{H^1(\Omega)}}+ \b{\langle \b{\tilde{\lambda}^{h}}- \b{\lambda}, \b{u-u^h} \rangle_{-1,1}}~.
\end{align*}
A use of Young's inequality in the last equation yields
\begin{align}\label{eq:Int_uh}
   \alpha\|\b{u-u_h}\|^2_{\b{H^1(\Omega)}} \lesssim  \frac{1}{2\alpha}\|\b{G_h}\|^2_{\b{V^*}} + \frac{\alpha}{2}\|\b{u-u^h}\|^2_{\b{H^1(\Omega)}}+ \b{\langle \b{\tilde{\lambda}^{h}}- \b{\lambda}, \b{u-u^h} \rangle_{-1,1}}~,
\end{align}
for some positive constant $\alpha$. As a result, we obtain a bound on $\|\b{u-u^h}\|^2_{H^1(\Omega)}$.  Further, using
\begin{align*}
\|\b{\lambda}- \b{\tilde{\lambda}^{h}}\|_{\b{V^{*}}} :=  \sup_{\b{\phi} \in \b{V}} \frac{ \b{G_h}(\b{\phi})- a(\b{u}-\b{u^h},~\b{\phi})}{\|\b{\phi}\|_{\b{V}}},
\end{align*} 
together with the bound on $\|\b{u-u^h}\|_{\b{H^1(\Omega)}}$ given in \eqref{eq:Int_uh} and the continuity of the bilinear form, we obtain the bound for  $\|\b{\lambda}- \b{\tilde{\lambda}^{h}}\|_{\b{H^{-1}(\O)}}$.
\end{proof}
In the next lemma, we infer the relation between the functional $\b{G_h}$ and estimator $\eta_h$.
\begin{lemma} \label{lem:Gh}
It holds that
\begin{align*}
    \|\b{G_h}\|_{\b{V^*}}\lesssim \eta_h.
\end{align*}
\end{lemma}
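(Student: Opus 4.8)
The plan is to bound $\b{G_h}(\b{v})$ for an arbitrary $\b{v}\in\b{V}$ by $\eta_h\|\b{v}\|_{\b{H^1(\Omega)}}$, and then take the supremum. The starting point is the definition \eqref{4.1}: using the variational equality \eqref{2.6} satisfied by $\b{u}$ (equivalently the integration-by-parts representation of $\b{\lambda}$ on $\Gamma_C$), and the definitions \eqref{LRES}, \eqref{3.30} of $\b{R^{lin}}$ and $\b{\tilde\lambda^h}$, I would rewrite
\begin{align*}
\b{G_h}(\b{v}) = \b{\langle \b{R^{lin}},\b{v}\rangle_{-1,1}} - \b{\langle \b{\tilde\lambda^h},\b{v}\rangle_{-1,1}} + \langle \b{\lambda},\b{v}\rangle_{-1,1} - \langle \b{R^{lin}} + \text{(contact terms)},\b{v}\rangle,
\end{align*}
i.e. reduce everything to $\b{R^{lin}}$ plus the boundary contact stress terms on $\Gamma_C$. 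Concretely, since $a(\b{u},\b{v}) - a(\b{u^h},\b{v}) = \b{\langle\b{R^{lin}},\b{v}\rangle_{-1,1}} - L(\b{v}) + L(\b{v}) - \dots$, one gets $\b{G_h}(\b{v}) = \b{\langle\b{R^{lin}},\b{v}\rangle_{-1,1}} - \b{\langle\b{\tilde\lambda^h},\b{v}\rangle_{-1,1}}$ after using \eqref{CFD}. The key algebraic identity to establish first is therefore $\b{G_h}(\b{v}) = \b{\langle\b{R^{lin}},\b{v}\rangle_{-1,1}} - \b{\langle\b{\tilde\lambda^h},\b{v}\rangle_{-1,1}}$ for all $\b{v}\in\b{V}$.

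Next I would invoke a quasi-interpolation operator (Scott–Zhang / Clément type) $I_h:\b{V}\to\b{V^h}$ and use the Galerkin-type orthogonality properties \eqref{prop1}, \eqref{prop2}, \eqref{2.3}: because $\b{\langle\b{R^{lin}},\psi_z\b{e_i}\rangle_{-1,1}}$ vanishes for all non-contact nodes and for the $\b{e_2}$-direction at contact nodes, subtracting $I_h\b{v}$ from $\b{v}$ kills all the element-interior, interior-edge, and Neumann-edge residual contributions except those in the patches $\omega_p$ with $p\in\mathcal V_h^C\cup\mathcal M_h^C$; and in the $\b{e_1}$ contact direction, $\b{\langle\b{R^{lin}},\psi_z\b{e_1}\rangle_{-1,1}} = \langle\b{\lambda^h},\phi_z\b{e_1}\rangle_{\b{h}}$ by \eqref{3.17}, which is exactly what $\b{\tilde\lambda^h}$ is built from, so those terms partially cancel against $\b{\langle\b{\tilde\lambda^h},\b{v}\rangle_{-1,1}}$. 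Writing $\b{v}-I_h\b{v} = \sum_p (\b{v}-I_h\b{v})$ localized via the partition of unity (Lemma \ref{sign2} for the $\b{\tilde\lambda^h}$ part, \eqref{residual} for the $\b{R^{lin}}$ part), I would expand $\b{G_h}(\b{v})$ into a sum over nodes $p$ of local terms, each of the shape $\int_{\omega_p}\b{r(u^h)}\cdot(\dots) + \int_{\gamma_{p,I}}\b{J^I}\cdot(\dots) + \int_{\gamma_{p,N}}\b{J^N}\cdot(\dots) + \int_{\gamma_{p,C}}(\dots)$ with the test-function factor being $\b{v}$ minus a local mean $c_p(\b{v})$ of the type \eqref{1111}–\eqref{2222}.

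Then I would apply Cauchy–Schwarz on each local term and the $L^2$-approximation estimates of Lemma \ref{sign3}: $\|\b{v}-c_p(\b{v})\|_{\b{L^2}(\omega_p)}\lesssim h_p\|\nabla\b{v}\|_{\b{L^2}(\omega_p)}$ and $\|\b{v}-c_p(\b{v})\|_{\b{L^2}(\gamma_p)}\lesssim h_p^{1/2}\|\nabla\b{v}\|_{\b{L^2}(\omega_p)}$. This converts $\int_{\omega_p}\b{r(u^h)}\cdot(\b{v}-c_p(\b{v}))$ into $\eta_{1,p}\|\nabla\b{v}\|_{\b{L^2}(\omega_p)}$, the interior-jump term into $\eta_{2,p}\|\nabla\b{v}\|$, the Neumann term into $\eta_{3,p}\|\nabla\b{v}\|$, and the $\Gamma_C$ stress terms into $\eta_{4,p}$ (tangential part $\hat\sigma_2$) and $\eta_{5,p}$ (normal part $\hat\sigma_1$) times $\|\nabla\b{v}\|$. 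The leftover contribution on $\Gamma_C$ in the $\b{e_1}$ direction — the mismatch between $\langle\b{\lambda^h},\phi_p\b{e_1}\rangle_{\b{h}}$ and $\langle\tilde\lambda^h_1,v_1\psi_p\rangle_{-1,1}$ over semi-contact nodes — is handled using Lemma \ref{sign} (it vanishes on no-contact nodes) and Remark \ref{rem} (it reduces to $\mathcal N_h^{FC}\cup\mathcal N_h^{SC}$); on full-contact nodes it vanishes since $u_1^h\equiv 0$ there forces the relevant boundary integral of $v_1-c_p(v_1)$ against a sign-definite quantity to be controlled, and on semi-contact nodes it is estimated by $(s^1_p d_p)^{1/2}(\dots)^{1/2} = \eta_{6,p}\cdot(\dots)$, which is the role of $\eta_6$; the term $\eta_7=\|(u_1^h)^+\|_{H^{1/2}(\Gamma_C)}$ enters because $\b{u^h}\notin\b{\cK}$ in general, so when one needs $v_1\le 0$ on $\Gamma_C$ one must correct by the positive part of $u_1^h$. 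Finally, summing over $p$ and using finite overlap of the patches $\omega_p$ (Cauchy–Schwarz over the node index) gives $\b{G_h}(\b{v})\lesssim \eta_h\|\b{v}\|_{\b{H^1(\Omega)}}$, hence the claim.

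I expect the main obstacle to be the careful bookkeeping on $\Gamma_C$: correctly matching the discrete pairing $\langle\cdot,\cdot\rangle_{\b{h}}$ against the continuous pairing that defines $\b{\tilde\lambda^h}$, isolating exactly which contact-node contributions survive the cancellation via \eqref{3.17}, and showing the residual pieces at full-contact and semi-contact nodes are dominated respectively by the $\eta_4,\eta_5$ stress terms and by $\eta_6$ (with $\eta_7$ absorbing the non-conformity $\b{\cK^h}\not\subset\b{\cK}$). The interior and Neumann estimates, by contrast, are the standard residual-estimator argument and should go through routinely once the quasi-interpolation and orthogonality are set up.
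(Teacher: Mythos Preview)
Your overall strategy matches the paper's: establish $\b{G_h}(\b{v}) = \langle\b{R^{lin}},\b{v}\rangle_{\b{-1,1}} - \langle\b{\tilde\lambda^h},\b{v}\rangle_{\b{-1,1}}$, localize via the partition of unity, exploit the orthogonality \eqref{prop1}--\eqref{prop2} to subtract local constants, and then apply Cauchy--Schwarz with the approximation estimates of Lemma~\ref{sign3}. (The paper works directly with the constants $c_p$ of \eqref{1111}--\eqref{2222} rather than a Scott--Zhang operator, but that is cosmetic.)

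The substantive point you have wrong is the contact-node bookkeeping. There is no ``partial cancellation'' and no leftover requiring $\eta_6$ or $\eta_7$. By \eqref{3.23} one has exactly
\[
\langle\tilde\lambda^h_1,v_1\rangle_{-1,1} \;=\; \sum_{p\in\mathcal V_h^C\cup\mathcal M_h^C} c_p(v_1)\,\langle\b{R^{lin}},\psi_p\b{e_1}\rangle_{\b{-1,1}},
\]
so that
\[
\sum_{p\in\mathcal V_h^C\cup\mathcal M_h^C}\langle R^{lin}_1,v_1\psi_p\rangle_{-1,1} - \langle\tilde\lambda^h_1,v_1\rangle_{-1,1}
\;=\;\sum_{p\in\mathcal V_h^C\cup\mathcal M_h^C}\langle R^{lin}_1,(v_1-c_p(v_1))\psi_p\rangle_{-1,1}
\]
identically. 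After this, every node (contact or not, in either direction) contributes only residual integrals tested against $(v_i-c_p(v_i))\psi_p$, and the paper bounds the whole expression by $(\sum_{i=1}^5\eta_i^2)^{1/2}\|\b{v}\|_{\b{V}}$ alone. Your proposed use of $\eta_{6,p}=(s^1_p d_p)^{1/2}$ and $\eta_7$ here could not work anyway: both quantities depend on $u^h_1$ (through $d_p=\int_{\tilde\gamma_{p,C}}(-u^h_1)^+\phi_p\,ds$ and $(u^h_1)^+$), not on the arbitrary test function $\b{v}$, so they cannot absorb a term that must scale with $\|\b{v}\|_{\b{V}}$. The estimators $\eta_6$ and $\eta_7$ enter only in Lemma~\ref{lem:LM}, where the test function is the specific error $\b{u}-\b{u^h}$.
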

\begin{proof}
For any $\b{v}\in \b{V}$, we have
\begin{align}
   \b{G_h}(\b{v}) &= a(\b{u}-\b{u^h},~\b{v}) + \b{\langle \b{\lambda}- \b{\tilde{\lambda}^{h}},\b{v} \rangle_{-1,1}} \hspace{1cm} (\text{using equation (\ref{4.1})}) \nonumber \\ &= {L}(\b{v}) - a (\b{u^h}, \b{v}) - \b{\langle \b{\tilde{\lambda}^{h}},\b{v} \rangle_{-1,1}} \hspace{1cm} (\text{using equation (\ref{CFD})})  \nonumber  \\ & = \b{\langle \b{R^{lin}}, \b{v} \rangle_{-1,1}} - \b{\langle \b{\tilde{\lambda}^{h}},\b{v} \rangle_{-1,1}}\hspace{1.7cm} (\text{using equation (\ref{LRES})})  \nonumber  \\ & = \sum_{i=1}^{2}  \sum_{p \in \mathcal{V}_h^o \cup \mathcal{M}_h^o}  \langle R^{lin}_i, v_i \psi_p \rangle_{-1,1} - \langle \tilde{\lambda}^{h}_1, v_1 \rangle_{-1,1}  \hspace{1.2cm} (\text{using Lemma (\ref{sign2}) and Lemma (\ref{sign1}))}\nonumber  \\ & = \sum_{i=1}^{2}  \sum_{p \in (\mathcal{V}_h^o \cup \mathcal{M}_h^o) \setminus (\mathcal{V}_h^C \cup  \mathcal{M}_h^C)   }  \langle R^{lin}_i, v_i \psi_p \rangle_{-1,1} + \sum_{p \in \mathcal{V}_h^C \cup \mathcal{M}_h^C}  \langle R^{lin}_2, v_2 \psi_p \rangle_{-1,1}  \nonumber  \\ & \hspace{0.3cm}+ \sum_{p \in \mathcal{V}_h^C \cup \mathcal{M}_h^C}  \langle R^{lin}_1, v_1 \psi_p \rangle_{-1,1} - \langle \tilde{\lambda}^{h}_1, v_1 \rangle_{-1,1}. \label{4.2}
\end{align}
Using the constants $c_p(v_i)$ introduced in the section \ref{sec4} together with equations (\ref{prop1}) and (\ref{prop2}), for $i=1,2$, we derive
\begin{align}
c_p(v_i)\b{\langle R^{lin}},  \psi_p \b{e_i} \rangle_{\b{-1,1}} &= 0 \quad \forall p \in  (\mathcal{V}_h^o  \cup \mathcal{M}_h^o) \setminus (\mathcal{V}_h^C \cup \mathcal{M}_h^C), \label{prop11}
\\
c_p(v_2)\langle \b{R^{lin}},  \psi_p \b{e_2} \rangle_{\b{-1,1}} &=0 \quad \forall p \in  \mathcal{V}_h^C \cup \mathcal{M}_h^C.\label{prop22}
\end{align}
Next, we subtract equations (\ref{prop22}) and (\ref{prop11}) from equation (\ref{4.2}) to get
\begin{align*}
\b{G_h(v)}&=\sum_{i=1}^{2}  \sum_{p \in (\mathcal{V}_h^o \cup \mathcal{M}_h^o) \setminus (\mathcal{V}_h^C \cup  \mathcal{M}_h^C)   }  \langle R^{lin}_i, (v_i-c_p(v_i)) \psi_p \rangle_{-1,1} + \sum_{p \in \mathcal{V}_h^C \cup \mathcal{M}_h^C}  \langle R^{lin}_2, (v_2-c_p(v_2)) \psi_p \rangle_{-1,1}  \nonumber  \\ & \hspace{0.3cm}+ \sum_{p \in \mathcal{V}_h^C \cup \mathcal{M}_h^C}  \langle R^{lin}_1, v_1 \psi_p \rangle_{-1,1} -\langle \tilde{\lambda}^{h}_1, v_1 \rangle_{-1,1}.
\end{align*}
Now, using the equation (\ref{3.23}) and \eqref{i0i}, we obtain the following equation
\begin{align*}
\b{G_h(v)}&=\sum_{i=1}^{2}  \sum_{p \in (\mathcal{V}_h^o \cup \mathcal{M}_h^o) \setminus (\mathcal{V}_h^C \cup  \mathcal{M}_h^C)   }  \langle R^{lin}_i, (v_i-c_p(v_i)) \psi_p \rangle_{-1,1} + \sum_{p \in \mathcal{V}_h^C \cup \mathcal{M}_h^C}  \langle R^{lin}_2, (v_2-c_p(v_2)) \psi_p \rangle_{-1,1}  \nonumber  \\ & \hspace{0.3cm}+ \sum_{p \in \mathcal{V}_h^C \cup \mathcal{M}_h^C}  \langle R^{lin}_1,(v_1-c_p(v_1)) \psi_p \rangle_{-1,1}.
\end{align*}
Finally, using equation (\ref{residual}),  H\"older's inequality and Lemma \ref{sign2}, we find
\begin{align*}
\b{G_h(v)}&=\sum_{i=1}^{2}  \sum_{p \in \mathcal{V}_h^o \cup \mathcal{M}_h^o} \Bigg(\int_{\omega_p}  r_i(\b{u^h}) (v_i-c_p(v_i)) \psi_p ~dx + \int_{\gamma_{p,I}} J^I_i(\b{u^h}) (v_i-c_p(v_i))\psi_p~ds \Bigg) \\ &\hspace{0.3cm}+\sum_{i=1}^{2}  \sum_{p \in \mathcal{V}_h^N \cup \mathcal{M}_h^N}  \int_{\gamma_{p,N}} J^N_i(\b{u^h}) (v_i-c_p(v_i))\psi_p~ds  - \sum_{p \in \mathcal{V}_h^C \cup \mathcal{M}_h^C}  \int_{\gamma_{p,C}} \hat{\sigma}_2(\b{u^h}) (v_2-c_p(v_2))\psi_p~ds  \nonumber  \\ &\hspace{0.3cm}- \sum_{p \in \mathcal{V}_h^C \cup \mathcal{M}_h^C}  \int_{\gamma_{p,C}} \hat{\sigma}_1(\b{u^h}) (v_1-c_p(v_1))\psi_p ~ds \nonumber \\ & \lesssim \Bigg( \sum_{i=1}^{2}  \sum_{p \in \mathcal{V}_h^o \cup \mathcal{M}_h^o}  h_p^2 \|r_i(\b{u^h})\|^2_{\b{L^2}(\omega_{p})}\Bigg)^{\frac{1}{2}} \Bigg( \sum_{i=1}^{2}  \sum_{p \in \mathcal{V}_h^o \cup \mathcal{M}_h^o}  \|\nabla v_i\|^2_{\b{L^2}(\omega_p)}\Bigg)^{\frac{1}{2}}
 \\ & \hspace{0.3cm}+ \Bigg( \sum_{i=1}^{2}  \sum_{p \in \mathcal{V}_h^o \cup \mathcal{M}_h^o}  h_p \|J^I_i(\b{u^h})\|^2_{\b{L^2}(\gamma_{p,I})}\Bigg)^{\frac{1}{2}} \Bigg( \sum_{i=1}^{2}  \sum_{p \in \mathcal{V}_h^o \cup \mathcal{M}_h^o}  \|\nabla v_i\|^2_{\b{L^2}(\omega_p)}\Bigg)^{\frac{1}{2}} \\ & \hspace{0.3cm}+ \Bigg( \sum_{i=1}^{2}  \sum_{p \in \mathcal{V}_h^N \cup \mathcal{M}_h^N}  h_p \|J^N_i(\b{u^h})\|^2_{\b{L^2}(\gamma_{p,N})}\Bigg)^{\frac{1}{2}} \Bigg( \sum_{i=1}^{2}  \sum_{p \in \mathcal{V}_h^N \cup \mathcal{M}_h^N} \|\nabla v_i\|^2_{\b{L^2}(\omega_p)}\Bigg)^{\frac{1}{2}} \\ & \hspace{0.3cm}+ \Bigg( \sum_{p \in \mathcal{V}_h^C \cup \mathcal{M}_h^C}  h_p \|\hat{\sigma}_2(\b{u^h})\|^2_{\b{L^2}(\gamma_{p,C})}\Bigg)^{\frac{1}{2}} \Bigg(  \sum_{p \in \mathcal{V}_h^C \cup \mathcal{M}_h^C}  \|\nabla v_2\|^2_{\b{L^2}(\omega_p)}\Bigg)^{\frac{1}{2}} \\ & \hspace{0.3cm}+ \Bigg( \sum_{p \in \mathcal{V}_h^C \cup \mathcal{M}_h^C}  h_p \|\hat{\sigma}_1(\b{u^h})\|^2_{\b{L^2}(\gamma_{p,C})}\Bigg)^{\frac{1}{2}} \Bigg( \sum_{p \in \mathcal{V}_h^C \cup \mathcal{M}_h^C}  \|\nabla v_1\|^2_{\b{L^2}(\omega_p)}\Bigg)^{\frac{1}{2}} \\ & \lesssim \Bigg( \sum_{i=1}^{5} \eta^2_i \Bigg)^{\frac{1}{2}} \|\b{v}\|_{\b{V}}.
\end{align*}
This completes the proof of this lemma.
\end{proof}
Next, we find an upper bound on the term $\b{\langle \b{\tilde{\lambda}^{h}}- \b{\lambda}, \b{u-u^h} \rangle_{-1,1}}$.
\begin{lemma} \label{lem:LM}
It holds that
\begin{align*}
\b{\langle \b{\tilde{\lambda}^{h}}- \b{\lambda}, \b{u-u^h} \rangle_{-1,1}}& \lesssim \frac{1}{2} \|\lambda_1 -\tilde{\lambda}^{h}_1\|^2_{H^{-1}(\Omega)}+ \eta_6^2 +\eta_7^2.    
\end{align*}
\end{lemma}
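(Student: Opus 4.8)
The plan is to split the duality pairing $\b{\langle \b{\tilde{\lambda}^{h}}- \b{\lambda}, \b{u-u^h} \rangle_{-1,1}}$ into the contribution coming from $\b{\lambda}$ and the contribution coming from $\b{\tilde{\lambda}^{h}}$, and to estimate each piece separately. For the $\b{\lambda}$ part, I would write $\b{\langle -\b{\lambda}, \b{u-u^h}\rangle_{-1,1}} = \b{\langle \b{\lambda}, \b{u^h}-\b{u}\rangle_{-1,1}}$; since $\b{u}\in\b{\cK}$ the term $\b{\langle\b{\lambda},\b{u}\rangle_{-1,1}}$ is handled via the complementarity built into Lemma \ref{CFD1}, and the remaining term $\b{\langle\b{\lambda},\b{u^h}\rangle_{-1,1}}$ I would massage using the representation \eqref{3.7} so that only the normal component on $\Gamma_C$ survives, i.e. it reduces to $\langle\lambda_1,u^h_1\rangle_{-1,1}$. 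Writing $u^h_1 = (u^h_1)^+ - (u^h_1)^-$ and using the sign property \eqref{eq:POOI} on the nonnegative part $(u^h_1)^+$, the "bad'' sign contribution is controlled by a term of the form $\|\lambda_1\|_{H^{-1}}\|(u^h_1)^+\|_{H^{1/2}(\Gamma_C)}$, which after a Young-type splitting against $\|\lambda_1-\tilde\lambda^h_1\|_{H^{-1}}$ produces exactly the $\eta_7^2 = \|(u^h_1)^+\|^2_{H^{1/2}(\Gamma_C)}$ term together with a small multiple of $\|\lambda_1-\tilde\lambda^h_1\|^2_{H^{-1}(\Omega)}$.

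For the $\b{\tilde{\lambda}^{h}}$ part, I would use \eqref{3.30} and the fact that $s^2_z = 0$ to reduce $\b{\langle\b{\tilde\lambda^h},\b{u-u^h}\rangle_{-1,1}}$ to $\langle\tilde\lambda^h_1, u_1-u^h_1\rangle_{-1,1}$, and then apply Lemma \ref{sign2} to localize it as $\sum_{p}\langle\tilde\lambda^h_1,(u_1-u^h_1)\psi_p\rangle_{-1,1}$. By Remark \ref{rem} only nodes in $\mathcal{N}^{FC}_h\cup\mathcal{N}^{SC}_h$ contribute. On full-contact nodes $u^h_1\equiv 0$ on $\gamma_{p,C}$, so $(u_1-u^h_1)\psi_p = u_1\psi_p$ there and, since $u_1\le 0$ a.e. on $\Gamma_C$ while $s^1_p\ge 0$ (Lemma \ref{lem:disL}), that contribution is $\le 0$ and can be dropped; alternatively it can be absorbed into the $(u^h_1)^+$/$\eta_7$ bookkeeping since $u_1 - u^h_1 \le u_1 + (u^h_1)^+$ wherever $u^h_1\le 0$. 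The genuinely delicate piece is the semi-contact sum: there I would bound $\langle\tilde\lambda^h_1, u_1\psi_p\rangle_{-1,1}$ from above by $s^1_p \int_{\tilde\gamma_{p,C}}(-u^h_1)^+\phi_p\,ds = s^1_p d_p = \eta_{6,p}^2$ after again using $u_1\le 0$, the sign $s^1_p\ge0$, and replacing $u_1$ by a controlled surrogate built from $u^h_1$ and $(u^h_1)^+$ via the approximation constants $c_p(\cdot)$ from \eqref{2222}; this is where the precise choice of $\tilde\gamma_{p,C}$ and the definition $d_p = \int_{\tilde\gamma_{p,C}}(-u^h_1)^+\phi_p\,ds$ is used in an essential way. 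Summing over $p$ gives the $\eta_6^2$ term.

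I expect the main obstacle to be the semi-contact estimate: one must simultaneously exploit the nodal sign condition $s^1_p\ge 0$, the a.e. sign of $u_1$ on $\Gamma_C$, and the fact that $u^h_1$ need not vanish on $\gamma_{p,C}$, while keeping the bound localized and dimensionally correct so that it collapses to $(s^1_p d_p)^{1/2}$ squared. Getting the cross-term to land on $\tfrac12\|\lambda_1-\tilde\lambda^h_1\|^2_{H^{-1}(\Omega)}$ (rather than on $\|\b{u}-\b{u^h}\|^2_{H^1}$, which Lemma \ref{lem:Rel} would not tolerate absorbing) requires care in which quantity the Young's inequality is applied against — here it should be the $H^{-1}$ norm of the multiplier error, paired with the $H^{1/2}(\Gamma_C)$ trace norm of $(u^h_1)^+$, so that the resulting $\|(u^h_1)^+\|_{H^{1/2}(\Gamma_C)}^2$ is precisely $\eta_7^2$ and not something that must be further estimated. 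The non-contact nodes contribute nothing by Remark \ref{rem}, and the routine parts (splitting, applying \eqref{3.7}, \eqref{eq:SCT}, \eqref{eq:POOI}) are straightforward bookkeeping.
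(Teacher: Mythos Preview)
Your overall strategy is close to the paper's --- same split into the $\b{\lambda}$ and $\b{\tilde\lambda^h}$ pieces, same use of the sign properties and of the node classification $\mathcal N^{FC}_h/\mathcal N^{SC}_h/\mathcal N^{NC}_h$ --- but there is a genuine bookkeeping gap that the paper closes with one extra combination step you are missing.

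The gap is this. To get $\tfrac12\|\lambda_1-\tilde\lambda^h_1\|^2_{H^{-1}}$ out of $\langle\lambda_1,(u^h_1)^+\rangle$, you must first insert $\pm\tilde\lambda^h_1$ \emph{inside the pairing} (not after passing to $\|\lambda_1\|_{H^{-1}}$, which would give the wrong norm), i.e.
\[
\langle\lambda_1,(u^h_1)^+\rangle
=\langle\lambda_1-\tilde\lambda^h_1,(u^h_1)^+\rangle+\langle\tilde\lambda^h_1,(u^h_1)^+\rangle,
\]
and apply Young (via the harmonic extension of $(u^h_1)^+$) to the first summand. That produces $\tfrac12\|\lambda_1-\tilde\lambda^h_1\|^2_{H^{-1}}+\eta_7^2$, but leaves the nonnegative term $\langle\tilde\lambda^h_1,(u^h_1)^+\rangle$ unaccounted for. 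This leftover cannot be dropped and is \emph{not} bounded by $\eta_6^2$ on its own: on a semi-contact patch $(u^h_1)^+$ and $(-u^h_1)^+$ have disjoint supports, so $s^1_p\int(u^h_1)^+\phi_p$ has nothing to do with $d_p=\int_{\tilde\gamma_{p,C}}(-u^h_1)^+\phi_p$. Your attempted patch, ``$u_1-u^h_1\le u_1+(u^h_1)^+$ wherever $u^h_1\le0$'', is false (there $(u^h_1)^+=0$ while $-u^h_1>0$).

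The paper's fix is to \emph{not} treat the two $\tilde\lambda^h_1$ contributions separately but to combine them:
\[
\langle\tilde\lambda^h_1,(u^h_1)^+\rangle+\langle\tilde\lambda^h_1,u_1-u^h_1\rangle
=\langle\tilde\lambda^h_1,\,(u^h_1)^++u_1-u^h_1\rangle.
\]
Pointwise on $\Gamma_C$ one has the identity $(u^h_1)^+-u^h_1=(-u^h_1)^+$, and since $u_1\le0$ there, $(u^h_1)^++u_1-u^h_1\le(-u^h_1)^+$. Because $\tilde\lambda^h_1=\sum_z s^1_z\phi_z$ is a nonnegative density (Lemma~\ref{lem:disL}), this gives the single clean bound
\[
\langle\tilde\lambda^h_1,(u^h_1)^++u_1-u^h_1\rangle\le\langle\tilde\lambda^h_1,(-u^h_1)^+\rangle,
\]
which depends only on discrete data. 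By Remark~\ref{rem} the non-contact nodes drop, on full-contact patches $(-u^h_1)^+\equiv0$, and what remains is exactly $\sum_{p\in\mathcal N^{SC}_h}s^1_p\,d_p=\eta_6^2$. Inserting this combination step turns your outline into the paper's proof.
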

\begin{proof}
Let $\tilde{u}^{h}_1:= \min \{u^h_1|_{\Gamma_C}, 0 \} \in H^{\frac{1}{2}}(\Gamma_C)$. Let $\tilde{w}$ be the harmonic extension 
 of $w= u^h_1-\tilde{u}^{h}_1 \in H^{\frac{1}{2}}(\Gamma_C) $ such that $\|\tilde{w}\|_{H^1(\Omega)} \lesssim \|w\|_{H^{\frac{1}{2}}(\Gamma_C)}$\cite{Ste:2008:book}. A use of \eqref{3.7} and Lemma \ref{sign1} yields
\begin{align} \label{4.5}
\b{\langle \b{\tilde{\lambda}^{h}}- \b{\lambda}, \b{u-u^h} \rangle_{-1,1}}&= \b{\langle \b{\tilde{\lambda}^{h}},\b{u-u^h} \rangle_{-1,1}} + \b{\langle   \b{\lambda}, \b{u^h-u} \rangle_{-1,1}} \nonumber\\
&=\langle {\tilde{\lambda}^{h}_1},{u_1-u^h_1} \rangle_{-1,1} + \langle   {\lambda_1}, {u^h_1-u_1} \rangle_{-1,1}.
\end{align}
Employing the relation \eqref{3.7}, we deal with the second term on the right hand side of the last equation as follows
\begin{align*}
\langle  \lambda_1, u^h_1-u_1 \rangle_{-1,1} 
&= \langle  \lambda_1, u^h_1-\tilde{u}^{h}_1 \rangle_{-1,1} + \langle \lambda_1, \tilde{u}^{h}_1-u_1 \rangle_{-1,1}.
\end{align*}
By definition of $\tilde{u}^{h}_1$, we have $\tilde{u}^{h}_1 \leq 0$ on $\Gamma_C$,  therefore by Lemma \ref{CFD1} we have
\begin{align*}
\langle  \lambda_1, u^h_1-u_1 \rangle_{-1,1} & \leq \langle  \lambda_1, u^h_1-\tilde{u}^{h}_1 \rangle_{-1,1} \\ & =\langle \lambda_1 -\tilde{\lambda}^{h}_1 , u^h_1-\tilde{u}^{h}_1 \rangle_{-1,1}  + \langle \tilde{\lambda}^{h}_1 , u^h_1-\tilde{u}^{h}_1 \rangle_{-1,1}.
\end{align*} 
Using the Young's inequality and the stability estimate for the harmonic extension, we find
\begin{align*}
\langle  \lambda_1, u^h_1-u_1 \rangle_{-1,1} & \lesssim \frac{1}{2} \|\lambda_1 -\tilde{\lambda}^{h}_1\|^2_{H^{-1}(\Omega)} + \frac{1}{2} \|u^h_1-\tilde{u}^{h}_1\|^2_{H^{\frac{1}{2}}(\Gamma_C)}  + \langle \tilde{\lambda}^{h}_1 , u^h_1-\tilde{u}^{h}_1 \rangle_{-1,1}.
\end{align*}
Thus, using equation (\ref{4.5}), we get
\begin{align} \label{4.6}
\langle \tilde{\lambda}^{h}_1- \lambda_1, u_1-u^h_1 \rangle_{-1,1} & \lesssim \frac{1}{2} \|\lambda_1 -\tilde{\lambda}^{h}_1\|^2_{H^{-1}(\Omega)} + \frac{1}{2} \|u^h_1-\tilde{u}^{h}_1\|^2_{H^{\frac{1}{2}}(\Gamma_C)}  + \langle \tilde{\lambda}^{h}_1 , u^h_1-\tilde{u}^{h}_1+u_1-u^h_1 \rangle_{-1,1}.
\end{align}
Note that on $\Gamma_C$, we have
\begin{align*}
(u^h_1-\tilde{u}^{h}_1)+u_1-u^h_1 &= (u^h_1)^++u_1-u^h_1 \\ & \leq (u^h_1)^+ -u^h_1 \\ &= (-u^h_1)^+.
\end{align*}
With this realization and in view of Lemma \ref{sign2}, we handle the third term on the right hand side of equation (\ref{4.6}) as follows
\begin{align*}
\langle \tilde{\lambda}^{h}_1 , u^h_1-\tilde{u}^{h}_1-u^h_1+u_1 \rangle_{-1,1} &= \sum_{p \in \mathcal{V}_h^C \cup \mathcal{M}_h^C} \langle \tilde{\lambda}^{h}_1 , (u^h_1-\tilde{u}^{h}_1-u^h_1+u_1) \psi_p \rangle_{-1,1}  \\ &\leq \sum_{p \in \mathcal{V}_h^C \cup \mathcal{M}_h^C} \langle \tilde{\lambda}^{h}_1 , (-u^h_1)^+\psi_p \rangle_{-1,1}. 
\end{align*}
Using the Lemma \ref{sign}, equations \eqref{3.23}, \eqref{i0i} together with the remark (\ref{rem}), we have 
\begin{align}\label{555}
\langle \tilde{\lambda}^{h}_1 , u^h_1-\tilde{u}^{h}_1-u^h_1+u_1 \rangle_{-1,1} &\leq \sum_{p \in \mathcal{N}^{FC}_h} s_p^1 c_p((-u^h_1)^+)\int_{\gamma_{p,C}}\phi_p~ds + \sum_{p \in \mathcal{N}^{SC}_h} s_p^1 c_p((-u^h_1)^+)\int_{\gamma_{p,C}}\phi_p~ds.
\end{align}
Since for any full contact node $p$, i.e., ${p \in \mathcal{N}^{FC}_h}$ we have $u^h_1=0$ on $\gamma_{p,C}$, thus  $(-u^h_1)^+ =0$. Therefore, the equation \eqref{555} reduces to
\begin{align*}
\langle \tilde{\lambda}^{h}_1 , u^h_1-\tilde{u}^{h}_1-u^h_1+u_1 \rangle_{-1,1} &\leq  \sum_{p \in \mathcal{N}^{SC}_h} s_p^1 c_p((-u^h_1)^+)\int_{\gamma_{p,C}}\phi_p~ds\\
&= \sum_{p \in \mathcal{N}^{SC}_h} s_p^1 \Bigg(  \int_{\gamma_{p,C}}\phi_p \frac{ \int_{\tilde{\gamma}_{p,C}} (-u^h_1)^+\phi_p ~ds}{\int_{\tilde{\gamma}_{p,C} \phi_p~ds}} \Bigg),
\end{align*}
where we have used the definition of constants $c_p((-u^h_1)^+)$ on semi contact nodes in the last step.

We observe that $\dfrac{\int_{\gamma_{p,C}}\phi_p~ds}{\int_{\tilde{\gamma}_{p,C} }\phi_p~ds}$ is a computable constant independent of $h$ where $\tilde{\gamma}_{p,C}$ is a strict subset of $\gamma_{p,C}$.
Therefore,
\begin{align*}
\langle \tilde{\lambda}^{h}_1 , u^h_1-\tilde{u}^{h}_1-u^h_1+u_1 \rangle_{-1,1} \lesssim \sum_{p \in \mathcal{N}^{SC}_h} s_p^1 d_p,
\end{align*}
where $d_p = \int_{\tilde{\gamma}_{p,C}} (-u^h_1)^+\phi_p~ds.$
Finally, we obtain
\begin{align} \label{4.8}
\langle  \lambda_1-\tilde{\lambda}^{h}_1, u^h_1-u_1 \rangle_{-1,1} & \lesssim 
\frac{1}{2} \|\lambda_1 -\tilde{\lambda}^{h}_1\|^2_{H^{-1}(\Omega)} + \eta_6^2 + \eta_7^2.
\end{align}
This completes the proof.
\end{proof}
In view of the Lemmas \ref{lem:Rel}, \ref{lem:Gh} and \ref{lem:LM}, we have the desired reliability estimate of the error estimator $\eta_h$.
\begin{remark}
The error estimator $\eta_h$ is comparable with the error estimator derived in \cite{Krause:2015:apost_Sig} for the linear conforming finite element method for the Signorini problem.

\end{remark}
\subsection{Efficiency of the error estimator}
This subsection is devoted to establish the efficiency of the estimator $\eta_h$. We will accomplish it using standard bubble function arguments \cite{Verfurth:1994:Adaptive}.
We would like to remark here that the efficiency of the estimator terms $\eta_6$ and $\eta_7$ involving positive part of $\b{u^h}$ is less clear theoretically due to quadratic nature of discrete solution and this will be pursued in future.
\begin{lemma} Let $\b{u} \in \b{V}$ be the solution of continuous problem \eqref{1.2} and $\b{u^h} \in \b{V^h}$ be the solution of discrete problem \eqref{2.1}. Then, the lower bound on the estimators for $k= 1,2$ with $p \in \mathcal{V}_h^0 \cup \mathcal{M}_h^0$,  for $k=3$ with $p\in \mathcal{V}_h^N \cup \mathcal{M}_h^N$, for $k=4,5$ with $p\in \mathcal{V}_h^C \cup \mathcal{M}_h^C$ follow as:
  \begin{align*}
 \eta_{k,p} \lesssim \|\b{u-u^h}\|_{\b{H^1}(\omega_p)} +\|\b{\lambda}-\b{\tilde{\lambda}_h}\|_{\b{H^{-1}}(\omega_p)}+ Osc(\b{f})^2+Osc(\b{g})^2,
   \end{align*}
where the oscillation terms are defined as
    \begin{align*}
    Osc(\b{f})^2&= \sum_{T\in \omega_p} h^2_T\|\b{f-\bar{f}}\|^2_{\b{L^2}(T)},\\
    Osc(\b{g})^2&=\sum_{e\in \gamma_{p,N}} h_e\|\b{g-\bar{g}}
    \|^2_{\b{L^2}(e)},
    \end{align*}
with $\b{\bar{v}}$ representing the $\b{L^2}$ projection of $\b{v}$ onto the space of piece-wise constant functions.
\end{lemma}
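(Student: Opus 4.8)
The plan is to prove the local efficiency estimates $\eta_{k,p}\lesssim \|\b{u-u^h}\|_{\b{H^1}(\omega_p)}+\|\b{\lambda}-\b{\tilde{\lambda}^{h}}\|_{\b{H^{-1}}(\omega_p)}+Osc(\b{f})^2+Osc(\b{g})^2$ for $k=1,\dots,5$ by the standard Verf\"urth bubble-function technique, treating the interior residual, the interior edge jump, the Neumann edge jump, and the two components of the contact-boundary stress in turn. The starting point is the residual identity \eqref{residual} for $\b{R^{lin}}$ together with the relation $\b{G_h}(\b{v})=\b{\langle\b{R^{lin}},\b{v}\rangle_{-1,1}}-\b{\langle\b{\tilde{\lambda}^{h}},\b{v}\rangle_{-1,1}}$ derived in Lemma \ref{lem:Gh}, so that for any $\b{v}\in\b{V}$ supported in a patch $\omega_p$ one has $\int_{\omega_p}\b{r(u^h)}\cdot\b{v}\,dx - \int_{\gamma_{p,I}}\b{J^I(u^h)}\cdot\b{v}\,ds + \int_{\gamma_{p,N}}\b{J^N(u^h)}\cdot\b{v}\,ds - \int_{\gamma_{p,C}}\b{\sigma(u^h)n}\cdot\b{v}\,ds = a(\b{u-u^h},\b{v})+\b{\langle\b{\lambda}-\b{\tilde{\lambda}^{h}},\b{v}\rangle_{-1,1}}$, up to the contribution of $\b{\tilde{\lambda}^{h}}$ which is supported on $\Gamma_C$. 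This converts each local estimator term into a duality pairing controlled by the true error.

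First I would handle $\eta_{1,p}$ and $\eta_{2,p}$. Replace $\b{f}$ by its piecewise-constant projection $\b{\bar f}$ (incurring $Osc(\b{f})$) and $\b{r(u^h)}$ becomes a polynomial on each $T$; choose $\b{v}=b_T\,\b{\bar r}$ with $b_T$ the interior (cubic) bubble on each $T\subset\omega_p$, use the norm-equivalence $\|\b{\bar r}\|_{\b{L^2}(T)}^2\lesssim\int_T b_T|\b{\bar r}|^2$ and the inverse estimate $\|\nabla(b_T\b{\bar r})\|_{\b{L^2}(T)}\lesssim h_T^{-1}\|\b{\bar r}\|_{\b{L^2}(T)}$ to get $h_T\|\b{r(u^h)}\|_{\b{L^2}(T)}\lesssim\|\b{u-u^h}\|_{\b{H^1}(T)}+Osc(\b{f})$. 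For $\eta_{2,p}$ use the edge bubble $b_e$ on each interior edge $e\in\gamma_{p,I}$, extend $\b{J^I(u^h)}$ into the two neighboring triangles, test with $\b{v}=b_e\,\mathcal{E}(\b{J^I(u^h)})$, absorb the resulting interior term using the already-bounded $\eta_{1,p}$, and arrive at $h_e^{1/2}\|\b{J^I(u^h)}\|_{\b{L^2}(e)}\lesssim\|\b{u-u^h}\|_{\b{H^1}(\omega_e)}+Osc(\b{f})$. The term $\eta_{3,p}$ is analogous with $\b{g}$ replaced by $\b{\bar g}$ (incurring $Osc(\b{g})$) and the edge bubble on the Neumann edge; note that on a boundary edge the bubble has support in a single triangle, so only $\eta_{1,p}$ needs to be reabsorbed.

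For $\eta_{4,p}$ and $\eta_{5,p}$, the key point is that on a contact edge $e\in\gamma_{p,C}$ the boundary stress $\b{\sigma(u^h)n}$ has components $\hat\sigma_1(\b{u^h})=\sigma_n(\b{u^h})$ and $\hat\sigma_2(\b{u^h})$ (the tangential component $\b{J^C_{tan}}$), both piecewise polynomials. Testing with the contact-edge bubble times the component in question, and now retaining the $\b{\tilde{\lambda}^{h}}$ contribution (which also lives on $\Gamma_C$), gives a bound of the shape $h_e^{1/2}\|\hat\sigma_i(\b{u^h})\|_{\b{L^2}(e)}\lesssim\|\b{u-u^h}\|_{\b{H^1}(\omega_p)}+\|\b{\lambda}-\b{\tilde{\lambda}^{h}}\|_{\b{H^{-1}}(\omega_p)}+\eta_{1,p}+Osc(\b{f})$, where for $\eta_{4,p}$ one uses that $\lambda_2=0$ and $\tilde\lambda^h_2=0$ so the second component of the error in the multiplier is clean, while for $\eta_{5,p}$ one keeps the full $H^{-1}$ norm of the first component. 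I expect the main obstacle to be precisely this contact-boundary step: one must carefully localize the duality pairing $\b{\langle\b{\lambda}-\b{\tilde{\lambda}^{h}},\b{v}\rangle_{-1,1}}$ to the patch $\omega_p$ (using a localized dual norm, as set up in the hypotheses) and control it by $\|\b{\lambda}-\b{\tilde{\lambda}^{h}}\|_{\b{H^{-1}}(\omega_p)}\|\b{v}\|_{\b{H^1}(\omega_p)}$ without picking up global quantities, and also verify that the quadratic (rather than affine) nature of $\b{u^h}$ does not spoil the norm-equivalence constants for the bubble functions — it does not, since all the constants depend only on the polynomial degree and shape-regularity. The authors themselves flag that $\eta_6$ and $\eta_7$ cannot be treated this way, which is consistent with the statement only covering $k=1,\dots,5$. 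Assembling the five local bounds and using finite overlap of the patches $\{\omega_p\}$ completes the argument.
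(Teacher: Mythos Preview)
Your treatment of $\eta_{1,p}$ through $\eta_{4,p}$ is correct and matches the paper's argument: interior and edge bubbles for the volume and jump residuals, and for the tangential contact stress $\eta_{4,p}$ you exploit $\lambda_2=0=\tilde\lambda^h_2$ so that testing with a function whose first component vanishes makes both multiplier contributions drop out.

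The gap is in the bound for $\eta_{5,p}$. With the plain edge bubble $\b v=b_e\,\hat\sigma_1(\b{u^h})\,\b{e_1}$ on a contact edge $e$, the integration-by-parts identity you write down yields
\[
\int_e \hat\sigma_1(\b{u^h})\,v_1\,ds \;=\; \int_T \b{r(u^h)}\cdot\b v\,dx \;-\; \b{G_h}(\b v)\;-\;\langle\tilde\lambda^h_1,v_1\rangle_{-1,1},
\]
and the last term is \emph{not} $\langle\lambda_1-\tilde\lambda^h_1,v_1\rangle$: it is the discrete multiplier alone, which is not controlled by $\|\b\lambda-\b{\tilde\lambda^h}\|_{\b{H^{-1}}(\omega_p)}$. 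Your phrase ``keep the full $H^{-1}$ norm of the first component'' does not address this; neither $\lambda_1$ nor $\tilde\lambda^h_1$ vanishes in the normal direction, so there is no cancellation as in the tangential case.

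The paper resolves this by \emph{engineering the test function} so that $\langle\tilde\lambda^h_1,\cdot\rangle$ disappears. Recall from \eqref{3.23} that $\langle\tilde\lambda^h_1,w\rangle=\sum_{q}s^1_q\,c_q(w)\int_{\gamma_{q,C}}\phi_q\,ds$, where the local means $c_q(\cdot)$ use the sets $\tilde\gamma_{q,C}$ introduced before Lemma~\ref{sign3}. One replaces $b_e$ by a combination $\theta_e=\sum_i\alpha_i\psi_i+\alpha_M\psi_M$ of bubbles supported on a subgrid of $e$ (one subinterval per contact node on $e$), with coefficients chosen so that (i) $\int_e\hat\sigma_1(\b{u^h})^2\theta_e\,ds$ is still equivalent to $\|\hat\sigma_1(\b{u^h})\|_{L^2(e)}^2$, and (ii) $\int_{\tilde\gamma_{q,C}}\hat\sigma_1(\b{u^h})\theta_e\,\phi_q\,ds=0$ for every full- or semi-contact node $q$ on $e$. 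Condition (ii) forces $c_q(\hat\sigma_1(\b{u^h})\theta_e)=0$, hence $\langle\tilde\lambda^h_1,\hat\sigma_1(\b{u^h})\theta_e\rangle=0$, after which the estimate reduces to $\|\b{G_h}\|_{\b{H^{-1}}(\omega_p)}+h_p\|\b{r(u^h)}\|_{\b{L^2}(\omega_p)}$ exactly as for $\eta_{2,p}$. For non-actual-contact nodes the extra term already vanishes by Lemma~\ref{sign}. This moment-annihilating construction is the missing ingredient in your plan.
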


\begin{proof}
\textbf{($i$)~(Local bound for $\eta_1$)} To this end, we choose an arbitrary triangle $T \in \omega_p$. Let $b_{T} \in P_3(T)$ be the interior bubble function which is zero on $\partial T$ and admits unit value at the barycenter of $T$. Set $\b{\beta_T} = b_{T} (\b{\bar{f}}+\b{div}\b{\sigma}(\b{u^h}))$ on $T$, where $\b{\bar{f}}$ is the function with the piece wise constant approximation of $\b{f}$ with the components $\bar{f_i}$. Extend $\b{\beta_T}$ to $\Omega$ by defining it to be zero on $\Omega \backslash \overline{T}$, call it $\b{\beta}$.  It is evident that $\b{\beta} \in [H^1_0(\Omega)]^2$. Using the equivalence of norms in finite dimensional normed spaces on a reference triangle and scaling arguments \cite{Ciarlet:1978:FEM}, we find
    \begin{align}\label{5.1}
     \|\b{\bar{f}}+\b{div}\b{\sigma}(\b{u^h})\|^2_{\b{L^2}(T)} &\lesssim \int_{T} \b{\beta}\cdot (\b{\bar{f}+ \b{div}\b{\sigma}(\b{u^h})})~dx \nonumber\\
     &= \int_{T} \b{\beta}\cdot (\b{\bar{f} - f})~dx+ \int_{T} \b{r(u^h)}\cdot \b{\beta}~dx. 
    \end{align}
Using integration by parts, \eqref{CFD} together with the definition of functional $\b{G^h}$ in \eqref{5.1}, we find
\begin{align*}
\|\b{\bar{f}}+\b{div}\b{\sigma}(\b{u^h})\|^2_{\b{L^2}(T)}
&\lesssim \int_{T} \b{\beta}\cdot (\b{\bar{f} - f})~dx+ \b{\langle\b{G_h},\b{\beta}\rangle_{-1,1,T}}.
\end{align*}
Henceforth using Cauchy Schwartz inequality and standard inverse estimates, we obtain
\begin{align*}
\|\b{\bar{f}}+\b{div}\b{\sigma}(\b{u^h})\|^2_{\b{L^2}(T)} &\lesssim  \big(\|\b{\bar{f}-f}\|_{\b{L^2}(T)} + h_T^{-1}\|\b{G_h}\|_{{\b{H^{-1}}(T)}}\big)\b{\|\b{\beta}}\|_{\b{L^2}(T)} \\
&\lesssim  \big(\|\b{\bar{f}-f}\|_{\b{L^2}(T)} + h_T^{-1}\|\b{G_h}\|_{{\b{H^{-1}}(T)}}\big)\|\b{\bar{f}}+\b{div}\b{\sigma}(\b{u^h})\|_{\b{L^2}(T)}.
\end{align*}
Thus, we find
\begin{align*}
  h_T^2\|\b{\bar{f}}+\b{div}\b{\sigma}(\b{u^h})\|^2_{\b{L^2}(T)}\lesssim  h_T^2\|\b{\bar{f}-f}\|^2_{\b{L^2}(T)} +\|\b{G_h}\|^2_{{\b{H^{-1}}(T)}}.
\end{align*}
Using the definition of Galerkin functional in the last equation, it follows that 
\begin{align*}
  h_T^2\|\b{\bar{f}}+\b{div}\b{\sigma}(\b{u^h})\|^2_{\b{L^2}(T)}\lesssim  h_T^2\|\b{\bar{f}-f}\|^2_{\b{L^2}(T)} +|\b{u-u^h}|^2_{\b{H^1}(T)} + \|\b{\lambda} -\b{\tilde{\lambda}^{h}}\|^2_{\b{H^{-1}}(T)}.
\end{align*}
Summing the above equation, over all the triangles in $\omega_p$ and using triangle inequality, we get 
\begin{align*}
    \eta_{1,p}^2 \lesssim \sum_{T \in \omega_p}h_T^2\|\b{\bar{f}-f}\|^2_{\b{L^2}(T)} +|\b{u-u^h}|^2_{\b{H^1}(\omega_p)} + \|\b{\lambda} -\b{\tilde{\lambda}^{h}}\|^2_{\b{H^{-1}}(\omega_p)}.
\end{align*}
\par
\textbf{($ii$)~(Local bound for $\eta_2$)} In order to prove this, let $e \in \gamma_{p,I}$ be an arbitrary interior edge sharing the elements $T^+$ and $T^-$ and let $\b{n_e}$ be the outward unit vector normal vector to $e$ heading from the triangle $T^-$ to $T^+$. To this end, we will construct an edge bubble function and exploit its properties as follows: Define $b_{e} \in P_4(T^- \cup T^+)$ be the polynomial such that it takes value 1 at the midpoint  of $e$ and zero on the boundary of polygon $T^- \cup T^+$. Further, we define $\b{\zeta} \in [P_1(T^{-} \cup T^{+})]^2$ to be the polynomial such that $\b{\zeta}=\sjump{\b{\sigma}(\b{u^h})}$ on edge e. Set $\b{\beta}= b_{e} \b{\zeta} $ on $T^- \cup T^+$ and zero outside the polygon $T^- \cup T^+$ yielding $\b{\beta} \in [H^1_0(\Omega)]^2$. A use of equivalence of norms on a reference element in finite dimensional spaces together with scaling arguments yields
    \begin{align}\label{5.3}
    \|\b{\zeta}\|^2_{\b{L^2}(e)} &\lesssim \int_e \b{\beta} \cdot \b{\zeta}~ds.
    \end{align}
Using integration by parts together with the definition of $\b{G^h}$ we find
\begin{align*}
    \int_{e}\b{\zeta} \cdot \b{\beta}~ds &=\int_{T^+\cup T^-} \b{\sigma}(\b{u^h}):\b{\epsilon}(\b{\beta})~dx + \int_{T^+\cup T^-}\b{div} \b{\sigma}(\b{u^h})\cdot\b{\beta}~dx\\ 
&= \int_{T^+\cup T^-} \b{\sigma}(\b{u^h}):\b{\epsilon}(\b{\beta})~dx + \int_{T^+\cup T^-}\b{r(u^h)}\cdot\b{\beta}~dx -\int_{T^+\cup T^-}\b{f}\cdot \b{\beta}~dx \\
&= \b{\langle \b{G_h},\b{\beta}\rangle}_{\b{-1,1},{T^+\cup T^-}} + \int_{T^+\cup T^-}\b{r(u^h)}\cdot\b{\beta}~dx.
\end{align*}
Further a use of Cauchy Schwartz inequality and standard inverse estimate yields
\begin{align}\label{5.4}
\int_{e}\b{\zeta} \cdot \b{\beta}~ds &\lesssim {\sum_{T \in {T^+\cup T^-}}}\big(h^{-1}_T\|\b{G_h}\|_{\b{H^{-1}}(T)} + \|\b{r(u^h)}\|_{\b{L^2}(T)}\big)\|\b{\beta}\|_{\b{L^2}(T)}  \nonumber\\  
&\lesssim {\sum_{T \in {T^+\cup T^-}}}\big(h^{-1}_T\|\b{G_h}\|_{\b{H^{-1}}(T)} + \|\b{r(u^h)}\|_{L^2(T)}\big)h^{1/2}_e \|\b{\zeta}\|_{\b{L^2}(e)}.
\end{align} 
Thus, combining $\eqref{5.3}$ and $\eqref{5.4}$, we find 
\begin{align}\label{5.5}
    h_e^{1/2}\|\sjump{\b{\sigma}(\b{u^h})}\|_{\b{L^2}(e)} &\lesssim \sum_{T \in {T^+\cup T^-}} \Big( \|\b{G_h}\|_{\b{H^{-1}}(T)}+ h_T \|\b{r(u^h)}\|_{\b{L^2}(T)} \Big).
    \end{align}
In view of the definition of Galerkin functional $\b{G_h}$ together with the estimate in $\b{(i)}$, we find 
\begin{align*}
    h_e\|\sjump{\b{\sigma}(\b{u^h})}\|^2_{\b{L^2}(e)} &\lesssim \sum_{T \in {T^+\cup T^-}} \Big(h^2_T\|\b{\bar{f}-f}\|^2_{\b{L^2}(T)} +|\b{u-u^h}|^2_{\b{H^1}(T)} + \|\b{\lambda} -\b{\tilde{\lambda}^{h}}\|^2_{\b{H^{-1}}(T)} \Big).
    \end{align*}
Summing over all edges in $\gamma_{p,I}$, we obtain a local bound as
\begin{align*}
     h_s\|\sjump{\b{\sigma}(\b{u^h})}\|^2_{\b{L^2}(\gamma_{p,I})} &\lesssim \sum_{T \in \omega_p}h^2_T\|\b{\bar{f}-f}\|^2_{\b{L^2}(T)} +|\b{u-u^h}|^2_{\b{H^1}(\omega_p)} + \|\b{\lambda} -\b{\tilde{\lambda}^{h}}\|^2_{\b{H^{-1}}(\omega_p)},
\end{align*}
which is the desired estimate.

\par
{\textbf{($iii$)~(Local bound for $\eta_3$)}}: The bound for the estimator term $\eta_3$ can be proved analogously to the estimator $\eta_2$. 
\par
\textbf{($iv$)~(Local bound for $\eta_4$)} In order to estimate $\eta_4$, let $e$ be an arbitrary edge of $\gamma_{p,C}$ such that the edge $e$ is the part of triangle $T$. Let $n_e$ denotes the outward unit normal vector to the edge $e$ which in this article is assumed to be (1,0). Next, we define a bubble function $b_{e} \in P_2(T)$  which vanishes on $\partial {T} \setminus e$ and takes value 1 at the midpoint of $e$. Let $\b{\zeta} \in [P_1(T)]^2$ be a polynomial such that its normal component is zero. i.e. $\zeta_n$~=~$\zeta_{1}=0$ and its tangential component i.e. $\b{\zeta_{\tau}}=(0,~\zeta_{2})=\hat{\b{\sigma}}_2(\b{u^h})$. Define $\b{\beta} = b_{e}\b{\zeta}$ on $T$ with its trivial extension to outside of $T$ which belongs to $\b{V}$. Further, this implies  $\b{\beta}~=\b{\beta_{\tau}}= (0,~\beta_2)$. A use of scaling arguments and equivalence of norms on finite dimensional spaces guarantees that
\begin{align}\label{5.6}
\|\b{\hat{\b{\sigma}}_2(\b{u^h})}\|^2_{\b{L^2}(e)} \lesssim \int_e \b{\hat{\b{\sigma}}_2(\b{u^h})} \cdot \b{\beta}~ds.
\end{align}
Using integration by parts and \eqref{CFD}, we have
\begin{align*}
\int_e \b{\hat{\b{\sigma}}_2(\b{u^h})} \cdot \b{\beta}~ds &= \int_{T} \b{\sigma}(\b{u^h}):\b{\epsilon}(\b{\beta})~dx + \int_{T} \b{div} \b{\sigma}(\b{u^h})\cdot \b{\beta}~dx\\
&= \int_{T} \b{\sigma}(\b{u^h}):\b{\epsilon}(\b{\beta})~dx + \int_{T} \b{r(u^h)}\cdot \b{\beta}~dx - \int_{T} \b{f}\cdot \b{\beta}~dx \\
&= \int_{T} \b{\sigma}(\b{u^h}):\b{\epsilon}(\b{\beta})~dx + \int_{T} \b{r(u^h)}\cdot \b{\beta}~dx - a(\b{u}, \b{\beta}) - \b{\langle \b{\lambda}, \b{\beta} \rangle_{-1,1}}. 
\end{align*}
In view of the definition (2.7), we have $\b{\langle \b{\lambda},~ \b{\beta} \rangle_{-1,1}} = 0 $. Therefore, the continuity of bilinear form $a(\cdot, \cdot)$ together with Cauchy Schwartz inequality and standard inverse estimate yields
\begin{align}\label{5.7}
\int_e \b{\hat{\b{\sigma}}_2(\b{u^h})} \cdot \b{\beta}~ds &= a(\b{u-u^h},\b{\beta}) + \int_{T} \b{r(u^h)}\cdot \b{\beta}~dx \nonumber\\
&\lesssim \big(h_T^{-1}|\b{u-u^h}|_{\b{H^1}(T)}+\|\b{r(u^h)}\|_{\b{L^2}(T)}\big)\|\b{\beta}\|_{\b{L^2}(T)}  \nonumber\\ 
&\lesssim \big(h_T^{-1}|\b{u-u^h}|_{\b{H^1}(T)}+\|\b{r(u^h)}\|_{\b{L^2}(T)}\big)h^{1/2}_e \|\b{\zeta}\|_{\b{L^2}(e)}.
\end{align}
Combining $\eqref{5.6}$ and $\eqref{5.7}$ together with the estimate in $\b{(i)}$, we obtain 
\begin{align*}
 h_e\|\b{\hat{\b{\sigma}}_2(\b{u^h})}\|^2_{\b{L^2}(e)} &\lesssim h^2_T\|\b{\bar{f}-f}\|^2_{\b{L^2}(T)} +|\b{u-u^h}|^2_{\b{H^1}(T)} + \|\b{\lambda} -\b{\tilde{\lambda}^{h}}\|^2_{\b{H^{-1}}(T)}.
\end{align*}
Further, summing over all the $e \in \gamma_{p,C}$, we find
\begin{align*}
    \eta^2_{4,p} &\lesssim \sum_{T \in \omega_p}h^2_T\|\b{\bar{f}-f}\|^2_{\b{L^2}(T)} +|\b{u-u^h}|^2_{\b{H^1}(\omega_p)} + \|\b{\lambda} -\b{\tilde{\lambda}^{h}}\|^2_{\b{H^{-1}}(\omega_p)}.   
\end{align*}
\par
\noindent	
\textbf{($v$)~(Local bound for $\eta_5$)}
Let $p\in \mathcal{V}^C_h \cup \mathcal{M}^C_h$ be arbitrary. Let $e \in \gamma_{p,C}$ be arbitrary. On the similar lines as in the proof of $(iii)$ we construct an edge bubble function $\zeta_e \in P_2(T)$. Using the definition of Galerkin functional $\b{G^h}$, we have
\begin{align*}
 \b{G_h}(\zeta_e \b{e_1}) &=\sum_{p \in (\mathcal{V}_h^o \cup \mathcal{M}_h^o) \setminus (\mathcal{V}_h^C \cup  \mathcal{M}_h^C)   }  \langle R^{lin}_1, \zeta_e \psi_p \rangle_{-1,1} + \sum_{p \in \mathcal{V}_h^C \cup \mathcal{M}_h^C}  \langle R^{lin}_1, \zeta_e \psi_p \rangle_{-1,1}\\ &- \sum_{p \in \mathcal{V}_h^C \cup \mathcal{M}_h^C} \langle \tilde{\lambda}^{h}_1, \zeta_e \psi_p \rangle_{-1,1}   \\
 &=\sum_{p \in (\mathcal{V}_h^o \cup \mathcal{M}_h^o)} \int_{\omega_p}r_1\zeta_e\psi_p ~dx-
 \sum_{p \in (\mathcal{V}_h^C \cup \mathcal{M}_h^C)} \int_{\gamma_{p,C}}\hat{\b{\sigma_1}}(\b{u^h})\zeta_e\psi_p ~ds\\&- \sum_{p \in \mathcal{V}_h^C \cup \mathcal{M}_h^C} \langle \tilde{\lambda}^{h}_1, \zeta_e \psi_p \rangle_{-1,1}.
\end{align*}
Now, if $p$ is a non actual contact node, then using Lemma \ref{sign} it holds that $\b{\tilde{\lambda}^{h}}=0$, thus we can proceed similar to the proof of lower bound of $\eta_4$.
We  consider the case when the node $p$ is a full contact node or semi contact node then the above equation reduces to
\begin{align*}
 \b{G_h}(\zeta_e \b{e_1}) &=\sum_{p \in (\mathcal{V}_h^o \cup \mathcal{M}_h^o)} \int_{\omega_p}r_1\zeta_e\psi_p -
 \sum_{p \in (\mathcal{V}_h^C \cup \mathcal{M}_h^C)} \int_{\gamma_{p,C}}\hat{\b{\sigma_1}}(\b{u^h})\zeta_e\psi_p \\&- \sum_{p \in \mathcal{N}_h^{FC} \cup \mathcal{N}_h^{SC}} s_p^1 c_p(\zeta_e) \int_{\gamma_{p,C}}\phi_p~ds.
\end{align*}
In order to get rid of the last term, we construct a suitable function $\theta_e$ such that $c_p(\theta_e)=0$. To this end, we will exploit the definition of $c_p(\cdot)$ which depends on $\tilde{\gamma}_{p,C}$. If $p$ is an interior vertex of $\mathcal{V}_h^C$, then $\gamma_{p,C}$ consists of two intervals. In that case we set $\tilde{\gamma}_{p,C}$ as inner third of $\gamma_{p,C}$ containing p. In contrast to this if $p$ is midpoint in $\mathcal{M}_h^C$, then $\gamma_{p,C}$ consist of one interval. Let $e_i$ be the sides of subgrid containing $p_i$ where $p_i \in \mathcal{V}^C_h$ and $e_M$ be the part of the subgrid containing the midpoint of $[p_i,~p_{i+1}]$ (see Figure 5.1). Now, we will use the above construction to define the function $\theta_e$ in the following way
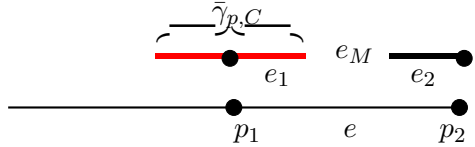
\begin{figure}[ht!]
\centering
\hspace{0cm}$\bar{\gamma}_{p,C}$ \\[1ex]
\begin{tikzpicture}
\hspace{-1.6cm}$\overbrace{\color{red}\rule{2cm}{2pt}}$  
\filldraw[black] (-1,0) circle(3pt);
\hspace{0.4cm}$e_M$\hspace{0.2cm}\rule{1cm}{2pt} 
\filldraw[black] (0,0) circle(3pt);	
\end{tikzpicture}\\[-1.5ex]
\hspace{3cm}$e_1$ \hspace{1.5cm}$e_2$ \\
\begin{tikzpicture}
\draw[black,thick](-3,0) -- (3,0);
\filldraw[black] (0,0) circle(3pt);
\filldraw[black] (3,0) circle(3pt);	
\end{tikzpicture} \\[-1ex]
\hspace{3cm}$p_1$ \hspace{1cm}$e$ \hspace{1cm}$p_2$\\
\caption{Subgrid of $\gamma_{p,C}$} \label{patch1}
\end{figure}





\begin{align}
\theta_e= \sum_{i=1}^{2} \alpha_i \psi_i + \alpha_M \psi_M,
\end{align} where $\psi_M$ and $\psi_i$ are the edge bubble functions corresponding to $e_M$ and $e_i$ respectively. The coefficients $\alpha_i$ and $\alpha_M$ are determined such that the following holds
\begin{enumerate}
\item[1.] $\int_e 1 = \underset{{p \in \mathcal{N}^{FC}_h \cup \mathcal{N}^{SC}_h}}{\sum} \int_e \hat{\sigma_1}(\b{u^h})\theta_e \phi_p$.
\item[2.] $\int_{e_i} \hat{\sigma_1}(\b{u^h})\theta_e \phi_{p_i}=0 \quad \forall$ semi contact and full contact nodes lying on edge $e$.
\end{enumerate}
Thus, we have $c_p(\hat{\sigma}_1(\b{u^h})\theta_e)=0$. Further, using the equivalence of norms in finite dimensional spaces, H\"older's inequality and the construction of $\theta_e$, we have
\begin{align*}
      \|\hat{\sigma_1} (\b{u_h})\|^2_{\b{L}^{2}(e)} & \lesssim \int_e \hat{\sigma_1} (\b{u_h}) \hat{\sigma_1} (\b{u_h}) \theta_e \psi_p~ds \\ &\lesssim \Big(-\b{\langle \b{G}_h, \hat{\sigma_1} (\b{u^h}) \theta_e \b{e}_1 \rangle}_{\b{-1,1},\omega_p} + \int_{\omega_p} \b{r}_1(\b{u}^h) \hat{\sigma_1} (\b{u^h}) \theta_e \psi_p~dx\Big) \notag\\
      & \lesssim \Big( \|\b{G}_h\|_{\b{H^{-1}}{(\omega_p)}} \|\hat{\sigma_1} (\b{u^h}) \theta_e \|_{\b{H}^{1}(\omega_p)} + \|\b{r}(\b{u}^h)\|_{\b{L}^{2}(\omega_p)} \|\hat{\sigma_1} (\b{u^h}) \theta_e \|_{\b{L}^{2}(\omega_p)}\Big) \notag \\
      & \lesssim \Big( \|\b{G}_h\|_{\b{H^{-1}}{(\omega_p)}} h_e^{-1}\|\hat{\sigma_1} (\b{u^h}) \theta_e \|_{\b{L}^{2}(\omega_p)} + \|\b{r}(\b{u}^h)\|_{\b{L}^{2}(\omega_p)} \|\hat{\sigma_1} (\b{u^h}) \theta_e \|_{\b{L}^{2}(\omega_p)}\Big) \notag \\
      & \lesssim \Big( \|\b{G}_h\|_{\b{H^{-1}}{(\omega_p)}} + h_e\|\b{r}(\b{u}^h)\|_{\b{L}^{2}(\omega_p)}  \Big)h_e^{-\frac{1}{2}}\|\hat{\sigma_1} (\b{u^h})\|_{\b{L}^{2}(e)} \notag 
\end{align*}
Thus, 
\begin{align*}
    h_e\|\hat{\sigma_1} (\b{u_h})\|^2_{\b{L}^{2}(e)} & \lesssim \Big( \|\b{G}_h\|^2_{\b{H^{-1}}(\omega_p)} + {h_T}^2\|\b{r}(\b{u}^h)\|^2_{\b{L}^{2}(\omega_p)}  \Big).
\end{align*}
We conclude the proof using the upper bound of $\b{G_h}$ and the estimate in $(i)$.

\end{proof}   
\section{Numerical Results}\label{sec:NumResults}
\par
\noindent
The aim of the given section is to numerically illustrate the theoretical findings derived in section \ref{sec:Discrete}
 and section \ref{sec:Apoestriori}, respectively. Therein, the numerical experiments are performed on two model problems using MATLAB(version R2020b). The first model problem is constructed in such a way that the exact solution $\b{u}$ is a priori known. Henceforth, the exact error is computed and the results are compared with the convergence of a posteriori error estimator $\eta_h$. In second model problem, the exact solution is unknown and we focus on the convergence of the error estimator $\eta_h$ therein.
The discrete variational inequality is solved using the primal dual active set strategy \cite{Wohlmuth:2005:PrimalDual}.
We carried out these tests on adaptive mesh for which we will make use of the following paradigm
\begin{center}
    \textbf{SOLVE} $\longrightarrow$ \textbf{ESTIMATE} $\longrightarrow$ \textbf{MARK} $\longrightarrow$ \textbf{REFINE}
\end{center}
The step \textbf{SOLVE} comprises of computing the discrete solution $\b{u^h}$ by solving the discrete variational inequality with the help of primal-dual active set strategy . Thereafter, in the next step \textbf{ESTIMATE}, the error estimator $\eta_h$ discussed in section 4 is computed element wise and further making the use of D\"orfler's marking strategy \cite{Dorfler:1996:Afem} with the parameter $\theta= 0.4$, we mark the elements of the triangulation followed by that in the step \textbf{REFINE} the marked elements are refined using the newest vertex bisection algorithm to obtain the new mesh and the algorithm is repeated. Note that when $\Gamma_C$ lies on the $x$-axis, we have $\boldsymbol{n}=(0,-1)$ on $\Gamma_C$. Therefore in this case, the estimator given in equation \eqref{esti} will have modified estimator contributions $\eta_{6,p}=(s^1_pd_p)^{\frac{1}{2}}$, where $d_p= \int_{\tilde{\gamma}_{p,C}} (u^h_2)^+\phi_p~ds$ and 
$\eta_7 = ~\|(-u^h_2)^{+}\|_{H^{\frac{1}{2}}(\Gamma_C)}$.
\par
For the given examples, the Lame's parameter $\mu$ and $\chi$ are computed as follows
\begin{align*}
    \mu = \dfrac{E}{2(1+\nu)},~~~~~~~~\chi=\dfrac{E\nu}{(1-2\nu)(1+\nu)}
\end{align*}
where $E$ and $\nu$ represents the Young's modulus and Poisson ratio \cite{KO:1988:CPBook}, respectively.
\par
\begin{figure*}[!ht]
	\begin{minipage}{8 cm}
	\includegraphics[width=1.1\textwidth]{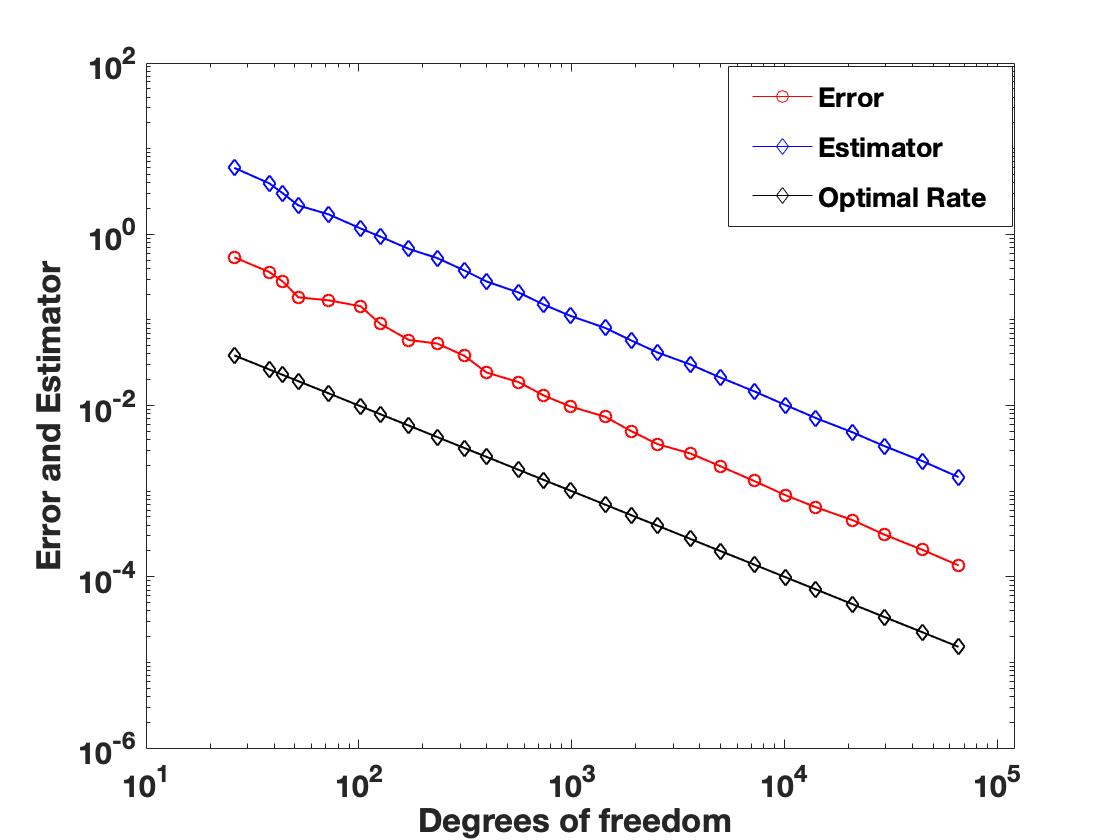}
	\end{minipage}
	\hspace{0.2cm}
	\begin{minipage}{8 cm}		
	\includegraphics[width=1.1\textwidth]{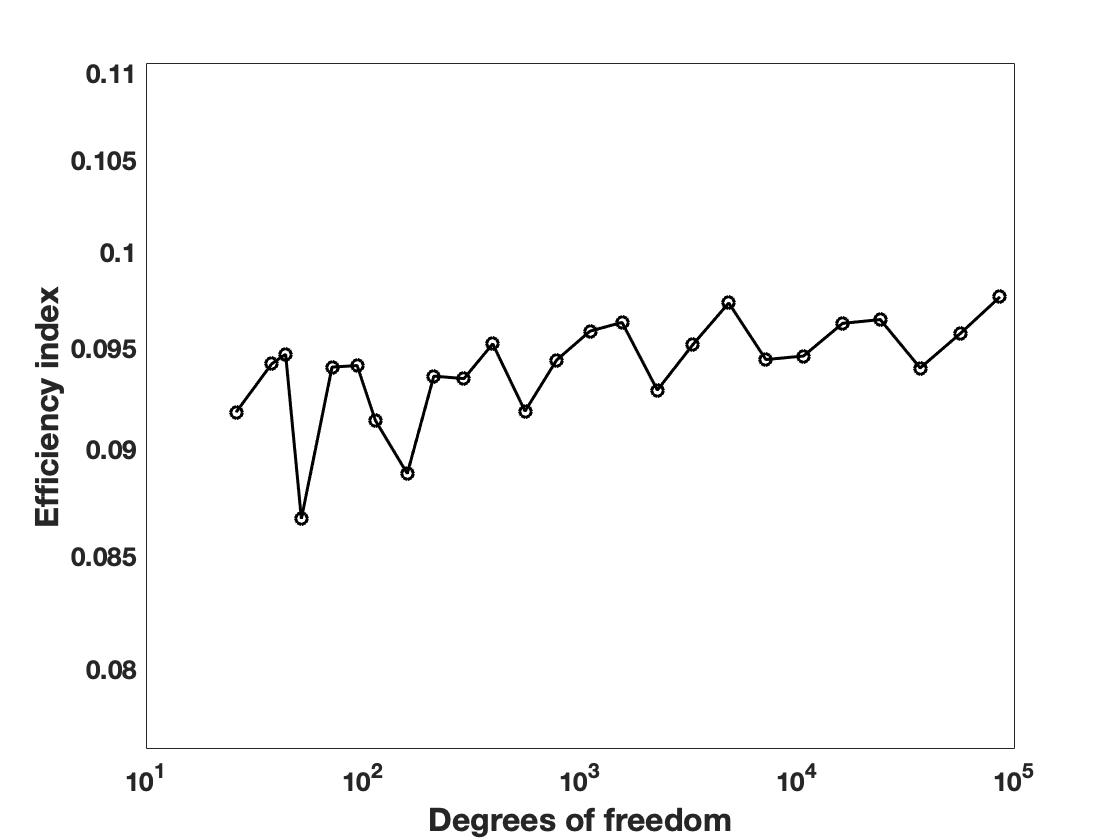}
	\end{minipage}
\caption{Convergence of Error, Estimator and Efficiency Index for Example 6.1.}
	\label{3}
\end{figure*}
\par
\begin{figure*}[!ht]
	\includegraphics[width=0.6\textwidth]{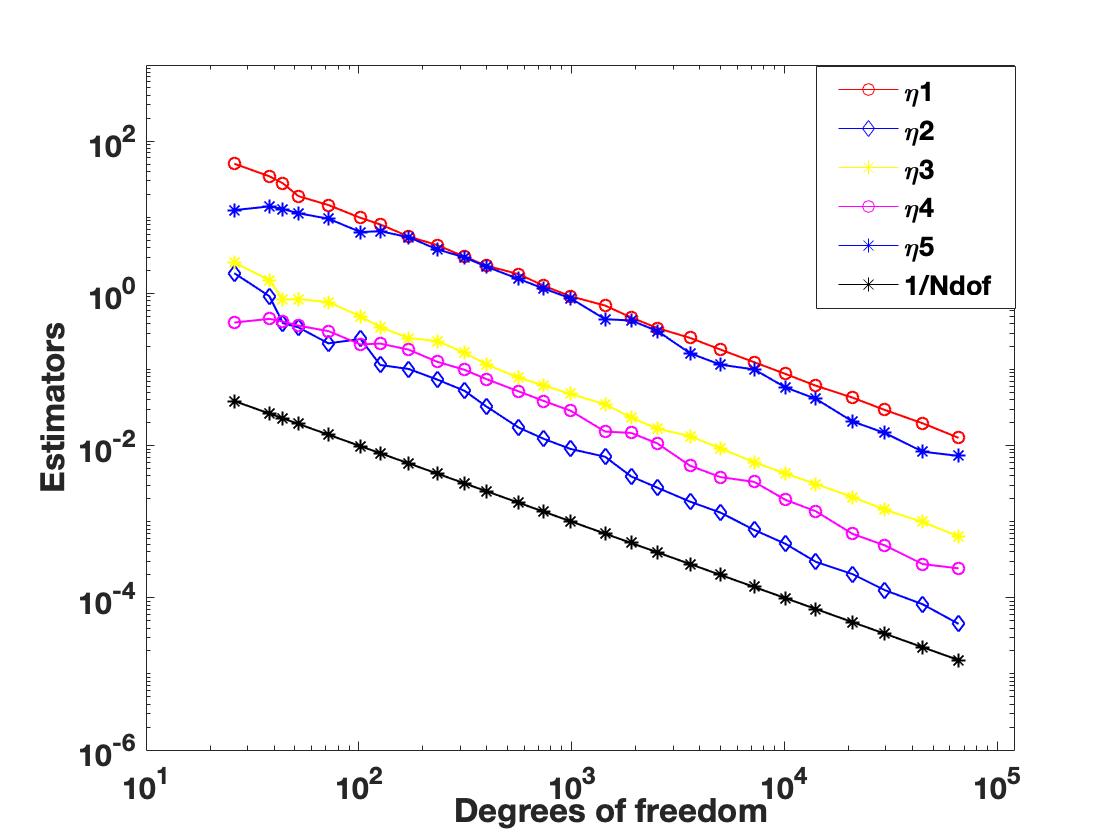}
\caption{Plot of Estimator contributions for Example 6.1.}
	\label{6}
\end{figure*}

\begin{example}
We assume the unit square = $(0,1)^2$ to be the domain $\Omega$ under consideration. The displacement fields vanishes on the top of the square i.e. zero Dirichlet boundary condition is applied on $(0,1) \times \{1\}$. The Neumann force $\b{g}$ is acting on the left and right hand side of the square namely $\{0,1\} \times (0,1)$. The bottom of the unit square is in contact with the rigid foundation and hence represent the contact boundary $\Gamma_C$. The Lame's parameters $\mu$ and $\chi$ are set to be 1.  The source term $\b{f}$ and Neumann data $\b{g}$ are computed in such a way that exact solution takes the form $\b{u}= (y^2(y-1), ~(x-2)y(1-y)e^y)$.

\par
Figure \ref{3}(a) illustrates the convergence behaviour of error and estimator with the increase in the number of degrees of freedom (Ndof). We observe that both the error and estimator converge with the optimal rate (1/Ndof), thus ensuring the reliability of the error estimator. The efficiency index depicting the efficiency of the error estimator can be seen in 
Figure \ref{3}(b).{ Figure \ref{6} ensures the convergence of each estimator contributions $\eta_i, 1\leq i \leq 5$ with the increase in degrees of freedom.  It is to be noted that the estimators $\eta_6$, $\eta_7$ vanishes for the given example as the entire contact boundary forms the active set.  } 
\end{example}
\par
 
\begin{figure*}[!ht]
	\begin{minipage}{8 cm}
	\includegraphics[width=1.1\textwidth]{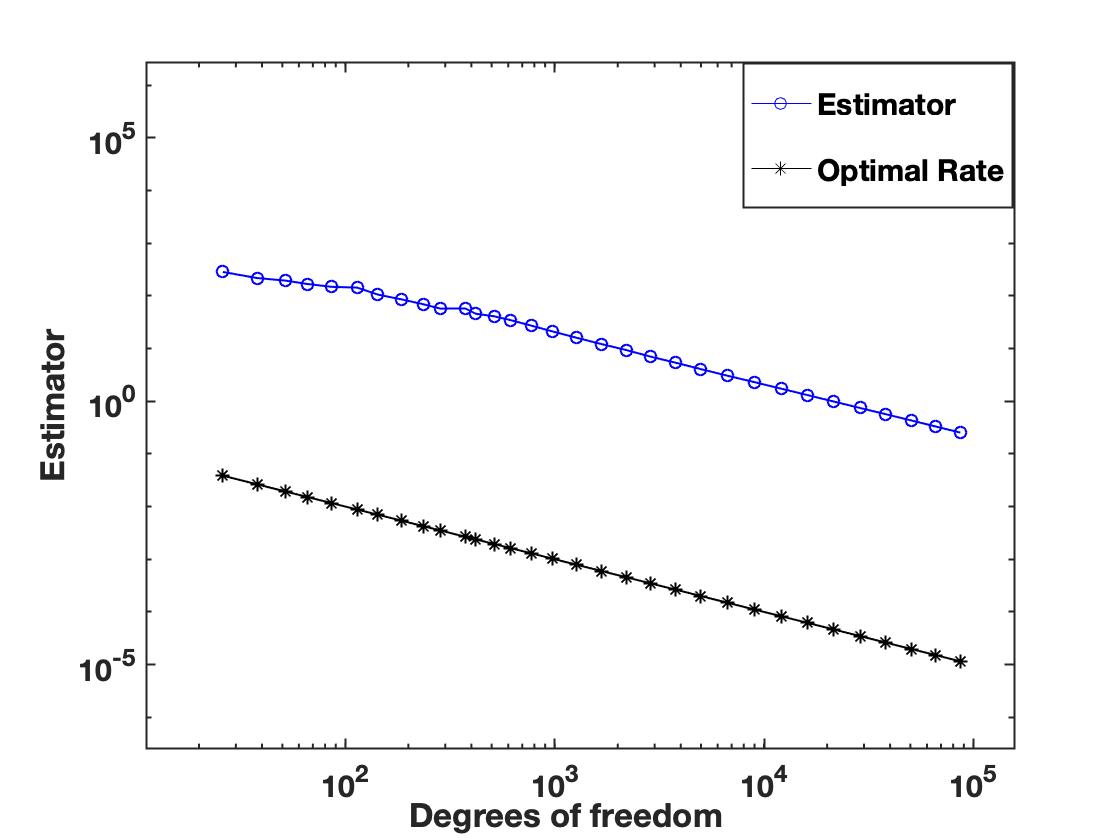}
	\end{minipage}
	\hspace{0.2cm}
	\begin{minipage}{8 cm}		
	\includegraphics[width=1.1\textwidth]{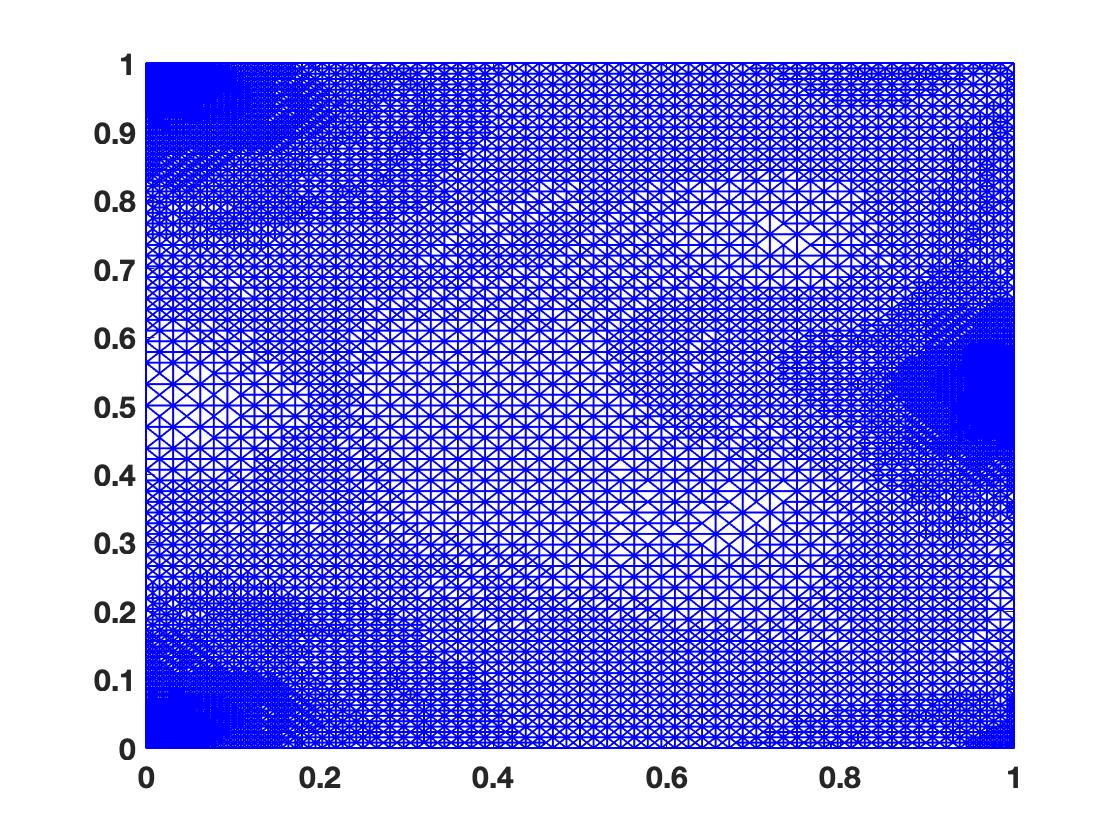}
	\end{minipage}
\caption{Estimator and Adaptive mesh for Example 6.2.}
	\label{13}
\end{figure*}
\par
\begin{figure*}[!ht]
	\includegraphics[width=0.6\textwidth]{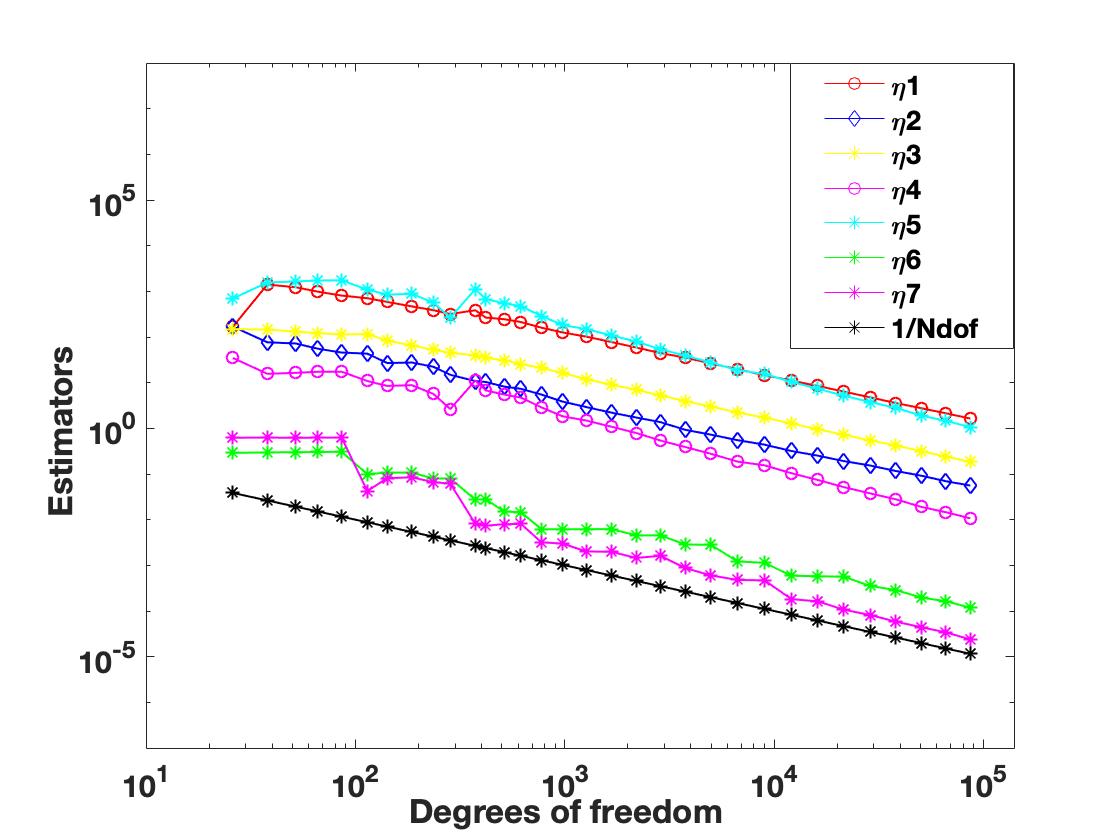}
\caption{Plot of estimator contributions for Example 6.2.}
	\label{20}
\end{figure*}
\begin{example}
In this example (motivated from \cite{Walloth:2019:Dg}), we simulate the deformation of unit elastic square which is displaced in $x$-direction towards the non zero obstacle $w(y)=-0.2+0.5|y-0.5|$. The Dirichlet boundary is set to be on the left side of elastic square at $x = 0$ with the non homogeneous condition $\b{u} = (0.1,0)$ on $\Gamma_D$ while the force density $\b{f}$ and the Neumann forces $\b{g}$ are set to zero. The Poisson ratio is $\nu$ = 0.3 and the Young’s modulus is $E=500$.   Due to non-zero obstacle, the following two terms in the error estimator will correspondingly change to $d_p= \int_{\tilde{\gamma}_{p,C}} (w-u^h_1)^+\phi_p~ds$ and 
$\eta_7 = ~\|(u^h_1-w)^{+}\|_{H^{\frac{1}{2}}(\Gamma_C)}$.  Figure $\ref{13}$(a) illustrates the convergence behaviour of estimator on the adaptive mesh as degrees of freedom increase. It is evident from the figure that the estimator converges optimally. Figure $\ref{13}$(b) depicts the adaptive mesh refinement at certain level. The high mesh refinement is observed near the intersection of disjoint Neumann and Dirichlet boundaries and also  near the free boundary region. The convergence behavior of estimator contributions is illustrated in Figure $\ref{20}$.\\
\end{example}

\textbf{Declarations.} This manuscript has no associated data.\\

\end{document}